\newtheorem{theorem}{Theorem}[section]
\newtheorem{proposition}[theorem]{Proposition}
\newtheorem{lemma}[theorem]{Lemma}
\newtheorem{corollary}[theorem]{Corollary}
\numberwithin{equation}{section}
\theoremstyle{definition}
\newtheorem{definition}[theorem]{Definition}
\newtheorem{example}[theorem]{Example}
\newtheorem{remark}[theorem]{Remark}
\newcommand{\Q}{\mathbb{Q}}
\newcommand{\Ha}{\mathbb{H}}
\newcommand{\R}{\mathbb{R}}
\newcommand{\C}{\mathbb{C}}
\newcommand{\Hom}{\operatorname{Hom}}
\newcommand{\input{tree_1}}{\input{tree_1}}
\newcommand{\dz}{\mathcal{D}} 
\newcommand{\de}{\mathcal{E}}
\newcommand{\Genz}{\mathfrak{G}}
\newcommand{\eenz}{\mathfrak{E}}
\newcommand{\genz}{\mathfrak{g}}
\newcommand{\penz}{\mathfrak{P}}
\newcommand{\benz}{\mathfrak{b}}
\DeclareRobustCommand{\gp}{\penz\!\genfrac{(}{)}{0pt}{}}
\newcommand\quotient[2]{
	\mathchoice
	{% \displaystyle
		\text{\raise1ex\hbox{$#1$}\Big/\lower1ex\hbox{$#2$}}%
	}
	{% \textstyle
		#1\,/\,#2
	}
	{% \scriptstyle
		#1\,/\,#2
	}
	{% \scriptscriptstyle  
		#1\,/\,#2
	}
}
\DeclareRobustCommand{\bi}{\genfrac{(}{)}{0pt}{}}
\title[q-analogues of MZV and the formal double Eisenstein space]{q-analogues of multiple zeta values \\and the formal double Eisenstein space}
\author{Henrik Bachmann}
\address{Graduate School of Mathematics,  Nagoya University, Nagoya, Japan.}
\email{henrik.bachmann@math.nagoya-u.ac.jp}
\subjclass[2010]{Primary 11M32; Secondary 	11F11}
\begin{document}
\date{\today}
\maketitle

\begin{abstract} 
In this survey article, we discuss the algebraic structure of $q$-analogues of multiple zeta values, which are closely related to derivatives of Eisenstein series. Moreover, we introduce the formal double Eisenstein space, which generalizes the formal double zeta space of Gangl, Kaneko, and Zagier. Using the algebraic structure of $q$-analogues of multiple zeta values, we will present a realization of this space. As an application, we will obtain purely combinatorial proofs of identities among (quasi-)modular forms. 
\end{abstract}

\section{Introduction}
In this note, we discuss the algebraic structure of certain $q$-analogues of multiple zeta values and their connection to modular forms. The purpose of this survey article is to give a self-contained summary of some of the results obtained in \cite{B1} and \cite{B2} and to motivate the works in progress \cite{BB}, \cite{BKM} and \cite{BIM}. Some parts of those results were also announced in \cite{BBK}. In particular, we give an explanation of the formal double Eisenstein space, which will be introduced and discussed in detail in \cite{BKM} and which can be seen as a generalization of the formal double zeta spaces introduced in \cite{GKZ}. We will see that understanding the algebraic structure of $q$-analogues of multiple zeta values allows us to write down an explicit realization of this space. As an application of this, we will give some purely combinatorial proofs of identities among (quasi-)modular forms such as the Ramanujan differential equations. \\

Multiple zeta values are defined for $k_1\geq 2, k_2,\dots,k_r \geq 1$ by
\begin{align}\label{eq:defmzv}
	\zeta(k_1,\ldots,k_r)=\sum_{m_1>\cdots>m_r>0 }\frac{1}{m_1^{k_1}\cdots m_r^{k_r}}\,.
\end{align}
These values are known to satisfy a collection of relations called the double shuffle relations. For example, in smallest depth and $k_1,k_2 \geq 2$ these are given by 
\begin{align}\label{eq:dzvds}\begin{split}
\zeta(k_1) \zeta(k_2) &= \zeta(k_1,k_2) + \zeta(k_2, k_1) + \zeta(k_1+k_2) \\
&= \sum_{j=2}^{k_1+k_2-1} \left( \binom{j-1}{k_1-1} + \binom{j-1}{k_2-1} \right) \zeta(j, k_1+k_2-j)\,.
\end{split}
\end{align}
The first one, called the stuffle product, is a direct consequence of the definition \eqref{eq:defmzv}. The second one, called the shuffle product, can be proven either by using an iterated integral expression for multiple zeta values or by using partial fraction decomposition. Motivated by the relations \eqref{eq:dzvds} Gangl, Kaneko and Zagier defined in \cite{GKZ} the \emph{formal double zeta space} for $k\geq 1$ by the $\Q$-vector space
\begin{align*}
	\dz_k = \quotient{\big \langle Z_k , Z_{k_1,k_2}, P_{k_1,k_2} \mid k_1 + k_2 = k, k_1,k_2 \geq 1 \big \rangle_\Q}{\eqref{eq:dzrel} }\,,        
\end{align*}
which is spanned by formal symbols $Z_k , Z_{k_1,k_2}, P_{k_1,k_2}$ satisfying the relations 
\begin{align}
	\begin{split}
		\label{eq:dzrel} 
		P_{k_1,k_2} &= Z_{k_1, k_2} +  Z_{k_2, k_1} + Z_{k_1+k_2} \\
		&= \sum_{j=1}^{k_1+k_2-1} \left( \binom{j-1}{k_1-1} + \binom{j-1}{k_2-1} \right) Z_{j, k_1+k_2-j}\,.
	\end{split}
\end{align}
In \cite{GKZ} different realizations of this space are given, where for a $\Q$-vector space $A$ a realization of $\dz_k$ in $A$ is defined as an element in $\Hom_\Q(\dz_k,A)$. By \eqref{eq:dzvds} together with some regularization one can show that for $A=\R$ we obtain a realization of $\dz_k$ by 
\begin{align}\begin{split}\label{eq:mzvrealization}
	Z_{k} &\longmapsto \begin{cases}
		\zeta(k) & k\geq 3\\
		0 & k=1,2
	\end{cases}\,,\\
	Z_{k_1,k_2} &\longmapsto  \begin{cases}
		\zeta(k_1,k_2) & k_1 \geq 2 \\
		-\zeta(k_2,1)-\zeta(k_2+1) & k_1 = 1, k_2\geq 2\\
		0 & k_1 = k_2 = 1 
	\end{cases}\,,\\
	P_{k_1,k_2} &\longmapsto \begin{cases}
		\zeta(k_1) \zeta(k_2) & k_1,k_2 \neq 1 \\
		0 & k_1 = 1 \vee k_2=1 
	\end{cases}\,.
	\end{split}
\end{align}
The Riemann zeta values $\zeta(k)$ also appear as the constant term of the Eisenstein series defined for $k\geq 2$ by
\begin{align}\label{def:gk}
G_k(\tau) = \zeta(k) + \frac{(-2\pi i)^k}{(k-1)!}\sigma_{k-1}(n)q^n\,,
\end{align}
where $\tau \in \Ha =\{ z \in \C \mid \operatorname{Im}(z) > 0\}$, $q=e^{2\pi i \tau}$ and $\sigma_{k-1}(n)=\sum_{d\mid n} d^{k-1}$.
For even $k\geq 4$ these are the first non-trivial examples of modular forms of weight $k$ for the full modular group. In \cite{GKZ} it was observed, that the relations \eqref{eq:dzvds} can be lifted to the space of holomorphic functions, such that the Riemann zeta values gets replaced by the Eisenstein series and the double zeta values by so-called double Eisenstein series $G_{k_1,k_r}$. In other words they showed that for $k\geq 4$ there exists a realization of $\dz_k$ in the space of holomorphic functions in the upper half plane with $Z_k \mapsto G_k$ (where $G_1$ is given by \eqref{def:gk} without the constant term), $	Z_{k_1,k_2} \mapsto  G_{k_1,k_r}$ and
\begin{align}\label{eq:pimageeisen}
P_{k_1,k_2} &\longmapsto G_{k_1} G_{k_2} + \frac{\delta_{k_1,2}}{2 k_2} G'_{k_2} + \frac{\delta_{k_2,2}}{2 k_1} G'_{k_1}\,.
\end{align}
Taking the constant term of this realization gives the multiple zeta value realization \eqref{eq:mzvrealization}.
But in contrast to  \eqref{eq:mzvrealization} the image of $P_{k_1,k_2}$ in \eqref{eq:pimageeisen} is not given exactly by the product of the depth one objects, since also derivatives of Eisenstein series appear. This has to do with the fact that $G_2$ is not modular, and it is one of the motivations for the formal double Eisenstein space, which will be introduced in \cite{BKM} and which we will explain at the end of this note. In this space, we will also consider "formal derivatives" of our objects. As it will turn out, this gives an analogue of the formal double zeta space, which is better suited for Eisenstein series in the sense that we will find a realization of this space where the formal symbol of the product gets mapped exactly to the product of Eisenstein series. It can also be seen as a generalization of the work of Gangl, Kaneko, Zagier from modular forms to quasi-modular forms.  

\subsection*{Acknowledgment}
The author would like to thank the organizers of the Waseda number theory Conference 2021 for giving him the opportunity to present parts of the results of \cite{BB}, \cite{BIM} and \cite{BKM}  during the conference. These projects are partially supported by JSPS KAKENHI Grant 19K14499.

\section{$q$-analogues of multiple zeta values}
This section recalls the $q$-analogues of multiple zeta values, which were introduced in \cite{B1}. Parts of the results are contained in \cite{BK} and detailed information can be found in \cite{B2}. Most of the time we will only consider the depth two case, but everything done in this section also has a generalization to arbitrary depths. 

\begin{definition}
A \emph{modified q-analogue of weight $k$} of a complex number $c \in \C$ is a q-series $f \in \C[[q]]$, such that
	\begin{align*}
		\lim_{q\rightarrow 1} (1-q)^k f = c\,.
	\end{align*}
\end{definition}
Here and in the following by $q\rightarrow 1$, we always mean the limit of $q$ on the real axis with $0<q<1$. Usually, our objects are just formal $q$-series in $\Q[[q]]$, but all appearing $q$-series in this note can also be seen as holomorphic functions in the unit disc ($|q|<1$) or as holomorphic functions in the upper half-plane by setting $q=e^{2\pi i \tau}$. We will not distinguish these three points of view in the following. 
\begin{proposition} Let $f(\tau)= a_0 + \sum_{n\geq 1} a_n q^n$ be a modular form of weight $k$.Then $f$ is a modified q-analogue of $(2\pi i)^k a_0$.
\end{proposition}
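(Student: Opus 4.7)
The plan is to use the modularity of $f$ to transfer the limit $q\to 1^-$ — which corresponds to $\tau\to 0$ along the positive imaginary axis — to the cusp at $i\infty$, where the Fourier expansion immediately reads off the constant term $a_0$.

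Concretely, I would parametrize the limit by $\tau = it$ with $t>0$, so that $q = e^{2\pi i\tau} = e^{-2\pi t}$ and $q\to 1^-$ becomes $t\to 0^+$. The weight-$k$ modular transformation $f(-1/\tau) = \tau^k f(\tau)$ coming from $S = \bigl(\begin{smallmatrix} 0 & -1 \\ 1 & 0 \end{smallmatrix}\bigr)$ then gives
\begin{align*}
f(it) \;=\; (it)^{-k}\,f(i/t)\,.
\end{align*}
As $t\to 0^+$ the point $i/t$ has imaginary part tending to $+\infty$, and the standard polynomial growth bound on the Fourier coefficients of a modular form ensures that the non-constant tail $\sum_{n\geq 1} a_n e^{-2\pi n/t}$ vanishes in the limit, so $f(i/t) \to a_0$.

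Combining this with $1-q = 1 - e^{-2\pi t} = 2\pi t + O(t^2)$ yields
\begin{align*}
(1-q)^k f(\tau) \;=\; \bigl(2\pi t + O(t^2)\bigr)^k (it)^{-k}\bigl(a_0 + o(1)\bigr) \;\longrightarrow\; \frac{(2\pi)^k}{i^k}\,a_0 \qquad (t\to 0^+)\,.
\end{align*}
Since a non-zero modular form for the full modular group must have even weight, $i^{-k} = i^k$, and the limit equals $(2\pi i)^k a_0$, as claimed.

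The approach is essentially forced, and the only genuinely delicate point is justifying $f(i/t) \to a_0$: this is not purely formal but rests on absolute convergence of the Fourier series together with the polynomial bound $|a_n| = O(n^{k-1})$. The sign identification at the end relies on evenness of $k$ in an essential way.
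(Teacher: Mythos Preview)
Your proof is correct and follows essentially the same route as the paper: use the transformation $f(-1/\tau)=\tau^k f(\tau)$ to convert the limit $\tau\to 0$ into $\tau\to i\infty$, where the constant term $a_0$ is read off, and combine with the expansion $1-q = -2\pi i\tau + O(\tau^2)$. The paper presents this in a single display without isolating the parametrization $\tau=it$; you are more explicit about the convergence $f(i/t)\to a_0$ and about the role of the evenness of $k$ in fixing the sign, but the argument is the same.
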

\begin{proof} If $f$ is modular of weight $k$ then $f(-\frac{1}{\tau}) = \tau^k f(\tau)$, i.e.
		\begin{align*}
	\lim_{q\rightarrow 1} (1-q)^k f(q) &= \lim_{\tau\rightarrow 0} ((2\pi i \tau)^k +  O(\tau^{k+1})) f(\tau) = \lim_{\tau\rightarrow 0} (2\pi i)^k f\left(-\frac{1}{\tau}\right) \\&= \lim_{\tau \rightarrow i\infty} (2\pi i)^k f(\tau) =  (2\pi i)^k a_0\,.
\end{align*}
\end{proof}

In particular the normalized Eisenstein series 
\begin{align}\label{eq:defgtilde}
	\widetilde{G}_k(\tau) := \frac{1}{(2\pi i)^k} G_k(\tau) = -\frac{B_k}{2k!}+ \frac{1}{(k-1)!}\sum_{n>0} \sigma_{k-1}(n)q^n 
\end{align}
are modified $q$-analogue (of weight $k$) of $\zeta(k)$ for even $k \geq 4$. But we will see that we have for any $k\geq 2$
\begin{align*}
	\lim_{q\rightarrow 1}  (1-q)^k	\underbrace{\frac{1}{(k-1)!}\sum_{n>0} \sigma_{k-1}(n)q^n}_{g(k) :=}  =  \zeta(k)\,.
\end{align*}
In the following we will consider a multiple version of the $q$-series $g(k)= \sum_{\substack{m>0\\n>0}} \frac{n^{k-1}}{(k-1)!} q^{mn}$.
\begin{definition}\label{def:singleg}For $k_1,\dots k_r \geq 1$ we define the $q$-series $g(k_1,\dots,k_r) \in \Q[[q]]$ by
	\begin{align*}
		g(k_1,\dots,k_r)= \sum_{\substack{m_1 > \dots > m_r > 0\\ n_1, \dots , n_r > 0}} \frac{n_1^{k_1-1}}{(k_1-1)!} \dots \frac{n_r^{k_r-1}}{(k_r-1)!}  q^{m_1 n_1 + \dots + m_r n_r } \,.
	\end{align*}
\end{definition}
Notice that for $r=1$ these are, up to the constant term, exactly the Eisenstein series $\widetilde{G}_k$. Moreover, they are modified q-analogues of multiple zeta values. 
\begin{proposition}\label{prop:garemodqanalog}
	For $k_1 \geq 2$, $k_2,\dots,k_r \geq 1$ the $g(k_1,\dots,k_r)$ are modified q-analogues of $\zeta(k_1,\dots,k_r)$, i.e. 
	\begin{align*}
	\lim\limits_{q\rightarrow 1} (1-q)^{k_1+\dots+k_r} g(k_1,\dots,k_r) = \zeta(k_1,\dots,k_r)\,.
\end{align*}	
\end{proposition}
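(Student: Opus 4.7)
The plan is to substitute $q=e^{-t}$ and analyse the asymptotics as $t\to 0^+$. Since $(1-q)^K = t^K(1+O(t))$ with $K:=k_1+\cdots+k_r$, it suffices to show
\[
\lim_{t\to 0^+} t^K g(k_1,\dots,k_r)(e^{-t}) = \zeta(k_1,\dots,k_r).
\]
The summand $\prod_i \frac{n_i^{k_i-1}}{(k_i-1)!}q^{m_in_i}$ factors across $i$, so I first carry out the $n_i$-summations for each fixed tuple $(m_1,\dots,m_r)$, obtaining
\[
g(k_1,\dots,k_r) = \sum_{m_1>\dots>m_r>0}\prod_{i=1}^r P_{k_i}(tm_i),\qquad P_k(x):=\sum_{n\geq 1}\frac{n^{k-1}}{(k-1)!}\,e^{-nx}.
\]
Writing $\varphi_k(x):=x^k P_k(x)$, the quantity to be studied becomes
\[
t^K g(k_1,\dots,k_r) = \sum_{m_1>\dots>m_r>0}\prod_{i=1}^r \frac{\varphi_{k_i}(tm_i)}{m_i^{k_i}}.
\]

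The next step is to establish two properties of $\varphi_k$: (a) $\varphi_k$ extends continuously to $[0,\infty)$ with $\varphi_k(0)=1$, and (b) $\varphi_k$ is bounded on $[0,\infty)$ by some constant $C_k$. For (a), I would compare $P_k(x)$ with $\int_0^\infty \frac{u^{k-1}}{(k-1)!}e^{-xu}\,du = x^{-k}$ via Euler--Maclaurin; equivalently, the Mellin transform $\int_0^\infty x^{s-1}P_k(x)\,dx = \Gamma(s)\zeta(s-k+1)/(k-1)!$ has a simple pole at $s=k$ with residue $1$. Property (b) then follows from continuity of $\varphi_k$ on $(0,\infty)$, the limit at $0$, and the exponential decay of $P_k(x)$ as $x\to\infty$.

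Finally, dominated convergence on the counting measure over $\{m_1>\dots>m_r>0\}$ finishes the argument. The hypothesis $k_1\geq 2$ is precisely what makes the dominating series
\[
\sum_{m_1>\dots>m_r>0}\prod_i \frac{C_{k_i}}{m_i^{k_i}} = \Big(\prod_i C_{k_i}\Big)\,\zeta(k_1,\dots,k_r)
\]
converge absolutely, and by (a) the summand $\prod_i\varphi_{k_i}(tm_i)/m_i^{k_i}$ tends pointwise to $\prod_i 1/m_i^{k_i}$ as $t\to 0^+$; hence the limit of the sum equals $\zeta(k_1,\dots,k_r)$.

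The only genuinely analytic input is step (a), and that is where the main obstacle lies: one must make the sum-to-integral passage for $P_k$ rigorous enough to conclude $\varphi_k(x)\to 1$ as $x\to 0^+$ together with a uniform bound. Everything else is algebraic rearrangement of absolutely convergent series.
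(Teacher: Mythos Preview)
Your proposal is correct and is essentially the paper's argument: sum over the $n_i$ first, factor out $(1-q)^K\sim t^K$, show each summand tends to $\prod_i m_i^{-k_i}$, and apply dominated convergence. The paper shortcuts your step~(a) by writing $\sum_{n\geq 1}\tfrac{n^{k-1}}{(k-1)!}X^{n}=\tfrac{P_k(X)}{(1-X)^{k}}$ with $P_k$ the normalized Eulerian polynomial, so that $\varphi_k(0)=1$ reduces immediately to the elementary identity $P_k(1)=1$ and no Euler--Maclaurin or Mellin analysis is needed.
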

\begin{proof} First rewrite 
	\begin{align*}
	g(k_1,\dots,k_r)= \sum_{m_1 > \dots > m_r > 0} \frac{P_{k_1}(q^{m_1})}{(1-q^{m_1})^{k_1}} \dots  \frac{P_{k_r}(q^{m_r})}{(1-q^{m_r})^{k_r}}\,,
\end{align*}
where the polynomials $P_k$ are defined by $\frac{P_k(X)}{(1-X)^k}:= \sum_{n>0}\frac{n^{k-1}}{(k-1)!} X^{k-1} $. These are, up to a normalization, the \emph{Eulerian polynomials} and they satisfy $P_k(1) = 1$ from which we get after using some elementary convergence criteria that
	\begin{align*}
	\lim\limits_{q\rightarrow 1} (1-q)^{k_1+\dots+k_r} g(k_1,\dots,k_r) = 	\lim\limits_{q\rightarrow 1} \sum_{m_1 > \dots > m_r > 0} \prod_{j=1}^r \left( \frac{1-q}{1-q^{m_j}}\right)^{k_j} P_{k_j}(q^{m_j}) = \sum_{m_1 > \dots > m_r > 0}   \prod_{j=1}^r \frac{1}{m_j^{k_j}} \,.
\end{align*}
\end{proof}

\begin{proposition} \label{prop:stuffle1}For $k_1,k_2 \geq 1$ we have
\begin{align}\label{eq:stuffle1}
	g(k_1)g(k_2) = g(k_1,k_2)+g(k_2,k_1)+g(k_1+k_2)+ \sum_{j=1}^{k_1+k_2-1} \lambda^j_{k_1,k_2}  \,g(j)\,,
\end{align}
where the rational numbers $\lambda^j_{k_1,k_2}$ are given by
{\small
\begin{align}\label{eq:deflambda}
	\lambda^j_{k_1,k_2}  = \left((-1)^{k_1-1} \binom{k_1+k_2-1-j}{k_2 -j} + (-1)^{k_2-1} \binom{k_1+k_2-1-j}{k_1-j}  \right) \frac{B_{k_1+k_2-j}}{(k_1+k_2-j)!} \,.
\end{align}}
\end{proposition}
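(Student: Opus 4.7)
The plan is to expand $g(k_1)g(k_2)$ via Definition~\ref{def:singleg} and split the resulting fourfold sum by the ordering of the outer indices $m_1,m_2$. Writing
\begin{align*}
g(k_1)g(k_2) = \sum_{m_1,m_2,n_1,n_2>0}\frac{n_1^{k_1-1}n_2^{k_2-1}}{(k_1-1)!(k_2-1)!}\,q^{m_1n_1+m_2n_2}
\end{align*}
and separating into the three cases $m_1>m_2$, $m_1<m_2$, $m_1=m_2$, the first two cases produce $g(k_1,k_2)$ and $g(k_2,k_1)$ directly. The diagonal case $m_1=m_2=m$ contributes, after the substitution $n=n_1+n_2$,
\begin{align*}
D = \sum_{m,n>0}c_{k_1,k_2}(n)\,q^{mn},\qquad c_{k_1,k_2}(n) := \frac{1}{(k_1-1)!(k_2-1)!}\sum_{n_1=1}^{n-1}n_1^{k_1-1}(n-n_1)^{k_2-1}.
\end{align*}
Since $g(j)=\sum_{m,n>0}\tfrac{n^{j-1}}{(j-1)!}q^{mn}$, identity~\eqref{eq:stuffle1} reduces to the single polynomial identity
\begin{align*}
c_{k_1,k_2}(n) = \frac{n^{k_1+k_2-1}}{(k_1+k_2-1)!} + \sum_{j=1}^{k_1+k_2-1}\lambda^j_{k_1,k_2}\,\frac{n^{j-1}}{(j-1)!}\,.
\end{align*}

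To prove this polynomial identity, I would expand $(n-n_1)^{k_2-1}$ by the binomial theorem and evaluate the inner power sums via Faulhaber's formula $\sum_{n_1=1}^{n-1}n_1^s=(B_{s+1}(n)-B_{s+1})/(s+1)$, writing $B_{s+1}(n)=\sum_\ell\binom{s+1}{\ell}B_{s+1-\ell}\,n^\ell$. On extracting the coefficient of $n^{j-1}$, the Bernoulli index $s+1-\ell$ collapses to the constant $k_1+k_2-j$ independent of the summation variable, producing a single alternating binomial sum times $B_{k_1+k_2-j}/(k_1+k_2-j)!$. The leading coefficient $1/(k_1+k_2-1)!$ of $n^{k_1+k_2-1}$ emerges from the Beta-integral identity $\sum_{i=0}^{k_2-1}(-1)^i\binom{k_2-1}{i}/(k_1+i)=(k_1-1)!(k_2-1)!/(k_1+k_2-1)!$ and corresponds to the $g(k_1+k_2)$ term.

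The main obstacle is matching the remaining alternating binomial sum with the manifestly $(k_1,k_2)$-symmetric two-term expression~\eqref{eq:deflambda} for $\lambda^j_{k_1,k_2}$. The natural one-sided expansion gives one of the two binomial contributions via a Chu--Vandermonde-type identity, and the companion term arises either from performing the binomial expansion on $n_1^{k_1-1}=(n-n_2)^{k_1-1}$ instead (the symmetry $c_{k_1,k_2}(n)=c_{k_2,k_1}(n)$ being manifest from the reflection $n_1\leftrightarrow n-n_1$) or from correctly handling the $\ell=0$ boundary term in Faulhaber's formula. A small additional subtlety arises when $k_1=1$: then $\sum_{n_1=1}^{n-1}n_1^0=n-1$ deviates from the naive Bernoulli evaluation $(B_1(n)-B_1)/1=n$ by $-1$, a discrepancy responsible for the $B_1=-\tfrac12$ contribution in the lowest-weight correction, as illustrated by $g(1)^2=2g(1,1)+g(2)-g(1)$ in the case $k_1=k_2=1$.
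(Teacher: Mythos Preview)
Your approach is correct and shares with the paper the decomposition of the double sum over $(m_1,m_2)$ into the three regions $m_1>m_2$, $m_1<m_2$, $m_1=m_2$. The difference lies entirely in how the diagonal term is treated. The paper packages the inner sum into the generating function
\[
L_m(X)=\sum_{k\geq 1}\frac{P_k(q^m)}{(1-q^m)^k}X^{k-1}=\frac{e^X q^m}{1-e^X q^m},
\]
verifies the closed-form identity $L_m(X)L_m(Y)=\frac{1}{e^{X-Y}-1}L_m(X)+\frac{1}{e^{Y-X}-1}L_m(Y)$, and then simply reads off coefficients using $\frac{z}{e^z-1}=\sum_n B_n z^n/n!$. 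The two summands of that identity immediately produce the two binomial terms in~\eqref{eq:deflambda}, so the symmetric shape of $\lambda^j_{k_1,k_2}$ falls out for free. Your route via Faulhaber's formula on the convolution $c_{k_1,k_2}(n)$ is equivalent at the level of coefficients of $X^n$ in $L_m(X)L_m(Y)$, but it first yields a single alternating binomial sum that you then have to reorganize (via Chu--Vandermonde or the symmetry $n_1\leftrightarrow n-n_1$) to match the two-term form; the boundary case you flag at $k_1=1$ is exactly the $B_1$ contribution that the generating function handles uniformly. In short: same decomposition, but the paper's generating-function trick replaces your Faulhaber-plus-Vandermonde bookkeeping by a one-line functional equation.
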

\begin{proof}The proof uses the generating function $
	L_m(X) = \sum_{k\geq 1} \frac{P_k(q^m)}{(1-q^m)^k} X^k = \frac{e^X q^m}{1- e^X q^m}
$. By direct calculation one then checks that
\begin{align*}
	L_m(X) L_m(Y) =  \frac{1}{e^{X-Y}-1} L_m(X) + \frac{1}{e^{Y-X}-1} L_m(Y),
\end{align*}
and uses $\sum_{n=0}^\infty B_n \frac{x^n}{n!} = \frac{x}{e^x-1}$ to show
\begin{align*}
    \frac{P_{k_1}(q^m)}{(1-q^m)^{k_1}}  \frac{P_{k_2}(q^m)}{(1-q^m)^{k_2}} &=  \frac{P_{k_1+k_2}(q^m)}{(1-q^m)^{k_1+k_2}} + \sum_{j=1}^{k_1+k_2-1} \lambda^j_{k_1,k_2} \frac{P_{j}(q^m)}{(1-q^m)^{j}}\,.
\end{align*}
The statement then follows since 
\begin{align*}
	g(k_1)g(k_2) &= \sum_{m_1>0}   \frac{P_{k_1}(q^{m_1})}{(1-q^{m_1})^{k_1}} \sum_{m_2>0}   \frac{P_{k_2}(q^{m_2})}{(1-q^{m_2})^{k_2}} \\
	&= \left(\sum_{m_1>m_2>0} +\sum_{m_2>m_1>0}+ \sum_{m_1=m_2>0}   \right)\frac{P_{k_1}(q^{m_1})}{(1-q^{m_1})^{k_1}}\frac{P_{k_2}(q^{m_2})}{(1-q^{m_2})^{k_2}}\,. 
\end{align*}
\end{proof}
Except for the lower weight terms the product \eqref{eq:stuffle1} looks exactly like the stuffle product $\zeta(k_1) \zeta(k_2) = \zeta(k_1,k_2) + \zeta(k_2, k_1) + \zeta(k_1+k_2)$ of zeta values. Notice that this formula for zeta values follows from Proposition \ref{prop:garemodqanalog} for $k_1,k_2\geq 2$ by multiplying  \eqref{eq:stuffle1} with $(1-q)^{k_1+k_2}$ and then taking the limit $q\rightarrow 1$ since the lower weight terms vanish. So a natural question is if there is also an analogue of the shuffle product for the $q$-series $g$. Before answering this question we consider the operator $q \frac{d}{dq}$ (which corresponds to $\frac{1}{2\pi i}\frac{d}{d \tau}$ for $q=e^{2\pi i \tau}$).
\begin{align*}
	q  \frac{d}{dq} g_k(q) = q  \frac{d}{dq} \sum_{\substack{m>0\\ n>0}} \frac{n^{k-1}}{(k-1)!} q^{mn} =  \sum_{\substack{m>0\\ n>0}}  \frac{m n^{k}}{(k-1)!} q^{mn}\,.
\end{align*}
We see that after taking the derivative we also have a $m$ appearing in the numerator. Moreover if we would take the $d$-th derivative we would get 
\begin{align*}
	\left( q  \frac{d}{dq} \right)^d g_k(q) =  \sum_{\substack{m>0\\ n>0}}  \frac{m^d n^{k+d-1}}{(k-1)!} q^{mn}\,.
\end{align*}
This leads us to define $g$ in a more general way. 
\begin{definition}For $k_1,\dots k_r \geq 1, d_1,\dots, d_r \geq 0$ define the $q$-series 
	\begin{align*}
		g\bi{k_1,\dots,k_r}{d_1,\dots,d_r}= \sum_{\substack{m_1 > \dots > m_r > 0\\ n_1, \dots , n_r > 0}} \frac{m_1^{d_1} n_1^{k_1-1}}{(k_1-1)!} \dots \frac{m_r^{d_r} n_r^{k_r-1}}{(k_r-1)!}  q^{m_1 n_1 + \dots + m_r n_r } \,.
	\end{align*}
	We say that this has \emph{weight} $k_1 + \dots + k_r + d_1 + \dots + d_r$ and \emph{depth} $r$.
\end{definition}
Clearly these generalize the $q$-series in Definition \ref{def:singleg} since we have
\begin{align*}
    g(k_1,\dots,k_r) = g\bi{k_1,\dots,k_r}{0,\dots,0}\,.
\end{align*}
Notice that again for $r=1$ these $q$-series are essentially the derivatives of Eisenstein series since for $k>d$ we have
\begin{align*}
     g\bi{k}{d} =  \frac{(k-d-1)!}{(k-1)!} \left(q \frac{d}{dq}\right)^d \widetilde{G}_{k-d} +\delta_{d,0} \frac{B_k}{2k!}\,.
\end{align*}
With the same idea as before it is easy to see that we have 
\begin{align*}
	q  \frac{d}{dq} g\bi{k_1,\dots,k_r}{d_1,\dots,d_r} = \sum_{j=1}^r 	k_j g\bi{k_1,\dots.k_j+1,\dots,k_r}{d_1,\dots,d_j+1,\dots,d_r}\,.
\end{align*}
But the main reason for introducing this more general definition is not that we can talk about derivatives, but rather we will see that this enables us to define an analogue of the shuffle product. Before doing this we will see that the formula for the stuffle product (Proposition \ref{prop:garemodqanalog}) generalizes easily. 
\begin{proposition}[Stuffle product analogue] \label{prop:stufflebig}
     For $k_1,k_2 \geq 1, d_1,d_2 \geq 0$ we have
\begin{align}
	g\bi{k_1}{d_1} g\bi{k_2}{d_2}  = &g\bi{k_1,k_2}{d_1,d_2} +g\bi{k_2,k_1}{d_2,d_1} +g\bi{k_1+k_2}{d_1+d_2}+ \sum_{j=1}^{k_1+k_2-1}\lambda^j_{k_1,k_2} \, g\bi{j}{d_1+d_2} \,,
\end{align}
where the $\lambda^j_{k_1,k_2}$ are given by \eqref{eq:deflambda}.
\end{proposition}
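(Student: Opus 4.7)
The plan is to mimic the proof of Proposition \ref{prop:stuffle1} essentially verbatim, with the only new ingredient being that each summand over $m_i$ now carries an extra factor of $m_i^{d_i}$. The point is that the "single-$m$" identity in the earlier proof does not depend on the exponents $d_i$ at all, so inserting the monomial weights is harmless.

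First I would rewrite both sides in the form that makes the Eulerian polynomials visible. By the same manipulation as in Proposition \ref{prop:garemodqanalog}, summing the inner $n_j$'s gives
\begin{align*}
g\bi{k}{d} = \sum_{m>0} m^{d}\, \frac{P_k(q^m)}{(1-q^m)^k}\,,
\end{align*}
and more generally
\begin{align*}
g\bi{k_1,k_2}{d_1,d_2} = \sum_{m_1>m_2>0} m_1^{d_1} m_2^{d_2}\,\frac{P_{k_1}(q^{m_1})}{(1-q^{m_1})^{k_1}}\frac{P_{k_2}(q^{m_2})}{(1-q^{m_2})^{k_2}}\,.
\end{align*}

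Next I would expand the product $g\bi{k_1}{d_1}g\bi{k_2}{d_2}$ as a double sum over $m_1,m_2>0$ and split the range according to $m_1>m_2$, $m_2>m_1$, or $m_1=m_2$:
\begin{align*}
g\bi{k_1}{d_1}g\bi{k_2}{d_2} = \Big(\sum_{m_1>m_2>0}+\sum_{m_2>m_1>0}+\sum_{m_1=m_2>0}\Big) m_1^{d_1}m_2^{d_2}\,\frac{P_{k_1}(q^{m_1})}{(1-q^{m_1})^{k_1}}\frac{P_{k_2}(q^{m_2})}{(1-q^{m_2})^{k_2}}\,.
\end{align*}
The first two sums are exactly $g\bi{k_1,k_2}{d_1,d_2}$ and $g\bi{k_2,k_1}{d_2,d_1}$. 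In the diagonal sum, $m_1=m_2=m$ so the combined monomial is $m^{d_1+d_2}$, and I would then apply the single-$m$ identity already established inside the proof of Proposition \ref{prop:stuffle1},
\begin{align*}
\frac{P_{k_1}(q^m)}{(1-q^m)^{k_1}}\frac{P_{k_2}(q^m)}{(1-q^m)^{k_2}} = \frac{P_{k_1+k_2}(q^m)}{(1-q^m)^{k_1+k_2}} + \sum_{j=1}^{k_1+k_2-1}\lambda^j_{k_1,k_2}\,\frac{P_j(q^m)}{(1-q^m)^j}\,,
\end{align*}
multiplied by $m^{d_1+d_2}$ and summed over $m>0$. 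Using the rewriting above this produces exactly $g\bi{k_1+k_2}{d_1+d_2}+\sum_j \lambda^j_{k_1,k_2}\,g\bi{j}{d_1+d_2}$, and assembling the three contributions gives the claimed identity.

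There is no real obstacle here: the whole content of the proof is already in Proposition \ref{prop:stuffle1}, and the extension consists only in observing that the factor $m_1^{d_1}m_2^{d_2}$ respects the three-range decomposition and collapses to $m^{d_1+d_2}$ on the diagonal. If anything deserves care it is checking that the rearrangement of the double sum is legitimate as an identity of formal $q$-series (which it is, since every coefficient of $q^N$ involves only finitely many $m_1,m_2$ with $m_1 n_1+m_2 n_2 = N$), so no analytic convergence is required.
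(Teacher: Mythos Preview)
Your proof is correct and follows exactly the route the paper takes: the paper's own proof is the one-line remark ``Similar proof as Proposition \ref{prop:stuffle1} since the exponents of the $m_j$ just add up,'' which is precisely what you have written out in detail.
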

\begin{proof} Similar proof as Proposition \ref{prop:stuffle1} since the exponents of the $m_j$ just add up.\end{proof}

Let $\operatorname{Part}_r(N)$ denote the set of partitions of $N$ made out of $r$ different parts. For example, $11=4+4+1+1+1$ is a partition of $N=11$ made out of $r=2$ different parts $(m_1,m_2)=(4,1)$ with multiplicities $(n_1,n_2)=(2,3)$, i.e. $N = m_1 n_1 + m_2 n_2$.  Any element $\lambda \in \operatorname{Part}_r(N)$ can be represented by a Young diagram 
\vspace{-1cm}
\begin{figure}[H] 
	\begin{center}
	\begin{tikzpicture}[scale=0.6]

\draw (-1,3) node{{$\lambda = $}};
\draw (0,2.5) -- (0,0) -- (1,0) -- (1,1.5) -- (2,1.5) -- (2,2.5);
\draw [densely dotted] (0,2.5) -- (0,3.5);
\draw [densely dotted] (2,2.5) -- (3,2.5) -- (3,3.5);
\draw (3,3.5) -- (4,3.5) -- (4,4.5) -- (6.2,4.5) -- (6.2,5.5) -- (0,5.5) -- (0,3.5);

\draw [thick, red] (0,5.5) -- (6.2,5.5); 
\draw [thick, blue] (0,5.5) -- (0,3.5); 
\draw [thick, blue, densely dotted] (0,2.5) -- (0,3.5);
\draw [thick, blue] (0,0) -- (0,2.5);

\draw[decoration={brace,raise=2pt},decorate] (0,5.5) -- node[above=2pt] {\small $m_1$} (6.2,5.5);
\draw[decoration={brace,raise=2pt},decorate] (0,4.5) -- node[above=2pt] {\small $m_2$} (4,4.5);
\draw[decoration={brace,raise=2pt},decorate] (0,2.5) -- node[above=2pt] {\small $m_{r-1}$} (2,2.5);
\draw[decoration={brace,raise=2pt},decorate] (0,1.5) -- node[above=2pt] {\small $m_{r}$} (1,1.5);

\draw[decoration={brace,raise=2pt},decorate] (6.2,5.5) -- node[right=2pt] {\small $n_{1}$} (6.2,4.5);
\draw[decoration={brace,raise=2pt},decorate] (4,4.5) -- node[right=2pt] {\small $n_{2}$} (4,3.5);
\draw[decoration={brace,raise=2pt},decorate] (2,2.5) -- node[right=2pt] {\small $n_{r-1}$} (2,1.5);
\draw[decoration={brace,raise=2pt},decorate] (1,1.5) -- node[right=2pt] {\small $n_{r}$} (1,0);

\draw [lightgray,ultra thin] (4,5.5) -- (4,4.7);
\draw [lightgray,ultra thin] (4.3,5.5) -- (4.3,4.7);
\draw [lightgray,ultra thin] (4.6,5.5) -- (4.6,4.7);
\draw [lightgray,ultra thin] (4.9,5.5) -- (4.9,4.7);
\draw [lightgray,ultra thin] (5.2,5.5) -- (5.2,4.7);
\draw [lightgray,ultra thin] (5.5,5.5) -- (5.5,4.7);
\draw [lightgray,ultra thin] (5.8,5.5) -- (5.8,4.7);
\draw [lightgray,ultra thin] (3.7,5.5) -- (3.7,3.7);
\draw [lightgray,ultra thin] (3.4,5.5) -- (3.4,3.7);
\draw [lightgray,ultra thin] (3.1,5.5) -- (3.1,3.7);
\draw [lightgray,ultra thin] (2.8,5.5) -- (2.8,2.7);
\draw [lightgray,ultra thin] (2.5,5.5) -- (2.5,2.7);
\draw [lightgray,ultra thin] (2.2,4.7) -- (2.2,2.7);
\draw [lightgray,ultra thin] (1.9,4.7) -- (1.9,2.7);
\draw [lightgray,ultra thin] (1.6,4.7) -- (1.6,1.7);
\draw [lightgray,ultra thin] (1.3,5.5) -- (1.3,3.1);
\draw [lightgray,ultra thin] (1,5.5) -- (1,3.3);
\draw [lightgray,ultra thin] (0.7,5.5) -- (0.7,3.3);
\draw [lightgray,ultra thin] (0.4,5.5) -- (0.4,2.4);
\draw [lightgray,ultra thin] (1.3,2.7) -- (1.3,1.6);
\draw [lightgray,ultra thin] (1,2.7) -- (1,1.5);
\draw [lightgray,ultra thin] (0.7,2.7) -- (0.7,2);
\draw [lightgray,ultra thin] (0.4,1.7) -- (0.4,0);
\draw [lightgray,ultra thin] (0.7,1.7) -- (0.7,0);

\draw [lightgray,ultra thin] (0.1,0.3) -- (0.9,0.3);
\draw [lightgray,ultra thin] (0.1,0.6) -- (0.9,0.6);
\draw [lightgray,ultra thin] (0.1,0.9) -- (0.9,0.9);
\draw [lightgray,ultra thin] (0.1,1.2) -- (0.9,1.2);
\draw [lightgray,ultra thin] (0.1,1.5) -- (0.9,1.5);

\draw [lightgray,ultra thin] (0.9,1.9) -- (1.9,1.9);
\draw [lightgray,ultra thin] (0.9,2.2) -- (1.9,2.2);
\draw [lightgray,ultra thin] (0.1,2.5) -- (1.9,2.5);

\draw [lightgray,ultra thin] (0.1,3.5) -- (2.9,3.5);
\draw [lightgray,ultra thin] (1.1,3.2) -- (2.9,3.2);
\draw [lightgray,ultra thin] (1.8,2.9) -- (2.9,2.9);

\draw [lightgray,ultra thin] (0.1,4.5) -- (3.9,4.5);
\draw [lightgray,ultra thin] (0.1,4.16) -- (3.9,4.16);
\draw [lightgray,ultra thin] (0.1,3.85) -- (3.9,3.85);

\draw [lightgray,ultra thin] (0.1,4.85) -- (1.4,4.85);
\draw [lightgray,ultra thin] (0.1,5.2) -- (1.4,5.2);
\draw [lightgray,ultra thin] (2.3,5.2) -- (6,5.2);
\draw [lightgray,ultra thin] (2.3,4.85) -- (6,4.85);

\end{tikzpicture}
	\end{center}
	\vspace{-0.7cm}
\end{figure}
\noindent where $N = m_1 n_1 + \dots + m_r n_r$ and $m_1 > \dots > m_r > 0$, $n_1, \dots, n_r > 0$. In particular we can write the coefficient of $q^N$ in the $q$-series $g$ as a sum over all elements in $\operatorname{Part}_r(N)$ 
\begin{align*}
	g\bi{k_1,\dots,k_r}{d_1,\dots,d_r}= \sum_{\substack{m_1 > \dots > m_r > 0\\ n_1, \dots , n_r > 0}} \underbrace{\frac{m_1^{d_1} n_1^{k_1-1}}{(k_1-1)!} \dots \frac{m_r^{d_r} n_r^{k_r-1}}{(k_r-1)!} }_{f(\lambda):=} q^{m_1 n_1 + \dots + m_r n_r }  = \sum_{N>0} \left( \sum_{\lambda \in \operatorname{Part}_r(N)} f(\lambda) \right) q^N\,,
\end{align*}
where $f: \operatorname{Part}_r(N) \rightarrow \Q$ sends a partition $\lambda$, given as above, to a polynomial in the $m_j$ and $n_j$ depending on the $k_j$ and $d_j$. On $\operatorname{Part}_r(N)$ we have an involution given by the conjugation $\rho$ of Young diagrams, which correspond to reflecting the Young diagram along the diagonal 
\vspace{-1cm}
\begin{figure}[H] 
	\begin{center}
		\begin{tikzpicture}[scale=0.6]

% \filldraw [draw=none,fill=black!5] ({-cos(60)},2) -- ({cos(60)},2) -- ({cos(60)},{sin(60)}) 
%      arc[start angle=60,end angle=120,radius=1]--({-cos(60)},2);
% \draw [thin]  ({cos(60)},2) -- ({cos(60)},{sin(60)}) 
%      arc[start angle=60,end angle=120,radius=1]--({-cos(60)},2);

% \filldraw ({-cos(60)},{sin(60)}) circle (0.5pt);
%  \draw  ({-cos(60)},{sin(60)}) node[below]{$\omega$};

% \filldraw (0,1) circle (0.5pt);
%  \draw  (0,1) node[below]{$i$};

% \filldraw ({cos(60)},{sin(60)}) circle (0.5pt);
%  \draw  ({cos(60)-0.05},{sin(60)}) node[below]{$-\overline{\omega}$};

% \draw [ultra thin,densely dashed] (-1,0) arc[start angle=180,delta angle=-180,radius=1];

%   \draw (0,1.5) node{{\Large $\mathcal{F}$}};
%   \draw (1,2pt) -- (1,-2pt) node[below]{$1$};
%   \draw (0.5,2pt) -- (0.5,-2pt) node[below]{$\frac{1}{2}$};
%   \draw (0,2pt) -- (0,-2pt) node[below]{$0$};
%   \draw (-0.5,2pt) -- (-0.5,-2pt) node[below]{$-\frac{1}{2}$};
%   \draw (-1,2pt) -- (-1,-2pt) node[below]{$-1$};

 \draw (-1,3) node{{$\lambda = $}};
\draw (0,2.5) -- (0,0) -- (1,0) -- (1,1.5) -- (2,1.5) -- (2,2.5);
\draw [densely dotted] (0,2.5) -- (0,3.5);
\draw [densely dotted] (2,2.5) -- (3,2.5) -- (3,3.5);
\draw (3,3.5) -- (4,3.5) -- (4,4.5) -- (6.2,4.5) -- (6.2,5.5) -- (0,5.5) -- (0,3.5);

\draw [thick, red] (0,5.5) -- (6.2,5.5); 
\draw [thick, blue] (0,5.5) -- (0,3.5); 
\draw [thick, blue, densely dotted] (0,2.5) -- (0,3.5);
\draw [thick, blue] (0,0) -- (0,2.5);

\draw[decoration={brace,raise=2pt},decorate] (0,5.5) -- node[above=2pt] {\small $m_1$} (6.2,5.5);
\draw[decoration={brace,raise=2pt},decorate] (0,4.5) -- node[above=2pt] {\small $m_2$} (4,4.5);
\draw[decoration={brace,raise=2pt},decorate] (0,2.5) -- node[above=2pt] {\small $m_{r-1}$} (2,2.5);
\draw[decoration={brace,raise=2pt},decorate] (0,1.5) -- node[above=2pt] {\small $m_{r}$} (1,1.5);

\draw[decoration={brace,raise=2pt},decorate] (6.2,5.5) -- node[right=2pt] {\small $n_{1}$} (6.2,4.5);
\draw[decoration={brace,raise=2pt},decorate] (4,4.5) -- node[right=2pt] {\small $n_{2}$} (4,3.5);
\draw[decoration={brace,raise=2pt},decorate] (2,2.5) -- node[right=2pt] {\small $n_{r-1}$} (2,1.5);
\draw[decoration={brace,raise=2pt},decorate] (1,1.5) -- node[right=2pt] {\small $n_{r}$} (1,0);

\draw [lightgray,ultra thin] (4,5.5) -- (4,4.7);
\draw [lightgray,ultra thin] (4.3,5.5) -- (4.3,4.7);
\draw [lightgray,ultra thin] (4.6,5.5) -- (4.6,4.7);
\draw [lightgray,ultra thin] (4.9,5.5) -- (4.9,4.7);
\draw [lightgray,ultra thin] (5.2,5.5) -- (5.2,4.7);
\draw [lightgray,ultra thin] (5.5,5.5) -- (5.5,4.7);
\draw [lightgray,ultra thin] (5.8,5.5) -- (5.8,4.7);
\draw [lightgray,ultra thin] (3.7,5.5) -- (3.7,3.7);
\draw [lightgray,ultra thin] (3.4,5.5) -- (3.4,3.7);
\draw [lightgray,ultra thin] (3.1,5.5) -- (3.1,3.7);
\draw [lightgray,ultra thin] (2.8,5.5) -- (2.8,2.7);
\draw [lightgray,ultra thin] (2.5,5.5) -- (2.5,2.7);
\draw [lightgray,ultra thin] (2.2,4.7) -- (2.2,2.7);
\draw [lightgray,ultra thin] (1.9,4.7) -- (1.9,2.7);
\draw [lightgray,ultra thin] (1.6,4.7) -- (1.6,1.7);
\draw [lightgray,ultra thin] (1.3,5.5) -- (1.3,3.1);
\draw [lightgray,ultra thin] (1,5.5) -- (1,3.3);
\draw [lightgray,ultra thin] (0.7,5.5) -- (0.7,3.3);
\draw [lightgray,ultra thin] (0.4,5.5) -- (0.4,2.4);
\draw [lightgray,ultra thin] (1.3,2.7) -- (1.3,1.6);
\draw [lightgray,ultra thin] (1,2.7) -- (1,1.5);
\draw [lightgray,ultra thin] (0.7,2.7) -- (0.7,2);
\draw [lightgray,ultra thin] (0.4,1.7) -- (0.4,0);
\draw [lightgray,ultra thin] (0.7,1.7) -- (0.7,0);

\draw [lightgray,ultra thin] (0.1,0.3) -- (0.9,0.3);
\draw [lightgray,ultra thin] (0.1,0.6) -- (0.9,0.6);
\draw [lightgray,ultra thin] (0.1,0.9) -- (0.9,0.9);
\draw [lightgray,ultra thin] (0.1,1.2) -- (0.9,1.2);
\draw [lightgray,ultra thin] (0.1,1.5) -- (0.9,1.5);

\draw [lightgray,ultra thin] (0.9,1.9) -- (1.9,1.9);
\draw [lightgray,ultra thin] (0.9,2.2) -- (1.9,2.2);
\draw [lightgray,ultra thin] (0.1,2.5) -- (1.9,2.5);

\draw [lightgray,ultra thin] (0.1,3.5) -- (2.9,3.5);
\draw [lightgray,ultra thin] (1.1,3.2) -- (2.9,3.2);
\draw [lightgray,ultra thin] (1.8,2.9) -- (2.9,2.9);

\draw [lightgray,ultra thin] (0.1,4.5) -- (3.9,4.5);
\draw [lightgray,ultra thin] (0.1,4.16) -- (3.9,4.16);
\draw [lightgray,ultra thin] (0.1,3.85) -- (3.9,3.85);

\draw [lightgray,ultra thin] (0.1,4.85) -- (1.4,4.85);
\draw [lightgray,ultra thin] (0.1,5.2) -- (1.4,5.2);
\draw [lightgray,ultra thin] (2.3,5.2) -- (6,5.2);
\draw [lightgray,ultra thin] (2.3,4.85) -- (6,4.85);

\draw [->,thick] (7.4,3) -- node[above=2pt]{$\rho$} (10.4,3);

\begin{scope}[shift={(12,0.3)}]

\draw (9,3) node{{$= \rho(\lambda)$}};
\draw (0,2.5) -- (0,-0.6) -- (1,-0.6) -- (1,1.5) -- (2,1.5) -- (2,2.5);
\draw [densely dotted] (0,2.5) -- (0,3.5);
\draw [densely dotted] (2,2.5) -- (3,2.5) -- (3,3.5);
\draw (3,3.5) -- (4,3.5) -- (4,4.5) -- (5.5,4.5) -- (5.5,5.5) -- (0,5.5) -- (0,3.5);

\draw [thick, blue] (0,5.5) -- (5.5,5.5); 
\draw [thick, red] (0,5.5) -- (0,3.5); 
\draw [thick, red, densely dotted] (0,2.5) -- (0,3.5);
\draw [thick, red] (0,-0.6) -- (0,2.5);

\draw[decoration={brace,raise=2pt},decorate] (0,5.5) -- node[above=2pt] {\small $n_1 + \dots + n_r$} (5.5,5.5);
\draw[decoration={brace,raise=2pt},decorate] (0,4.5) -- node[above=2pt] {\tiny $n_1 + \dots + n_{r-1}$} (4,4.5);
\draw[decoration={brace,raise=2pt},decorate] (0,2.5) -- node[above=2pt] {\tiny $n_1+n_2$} (2,2.5);
\draw[decoration={brace,raise=2pt},decorate] (0,1.5) -- node[above=2pt] {\tiny $n_1$} (1,1.5);

\draw[decoration={brace,raise=2pt},decorate] (5.5,5.5) -- node[right=2pt] {\small $m_r$} (5.5,4.5);
\draw[decoration={brace,raise=2pt},decorate] (4,4.5) -- node[right=2pt] {\small $m_{r-1}-m_r$} (4,3.5);
\draw[decoration={brace,raise=2pt},decorate] (2,2.5) -- node[right=2pt] {\small $m_2-m_3$} (2,1.5);
\draw[decoration={brace,raise=2pt},decorate] (1,1.5) -- node[right=2pt] {\small $m_1-m_2$} (1,-0.6);

\draw [lightgray,ultra thin] (4,5.5) -- (4,4.7);
\draw [lightgray,ultra thin] (4.3,5.5) -- (4.3,4.7);
\draw [lightgray,ultra thin] (4.6,5.5) -- (4.6,4.7);
\draw [lightgray,ultra thin] (4.9,5.5) -- (4.9,4.7);
\draw [lightgray,ultra thin] (5.2,5.5) -- (5.2,4.7);
\draw [lightgray,ultra thin] (3.7,5.5) -- (3.7,3.7);
\draw [lightgray,ultra thin] (3.4,5.5) -- (3.4,3.7);
\draw [lightgray,ultra thin] (3.1,5.5) -- (3.1,3.7);
\draw [lightgray,ultra thin] (2.8,5.5) -- (2.8,2.7);
\draw [lightgray,ultra thin] (2.5,5.5) -- (2.5,2.7);
\draw [lightgray,ultra thin] (2.2,4.7) -- (2.2,2.7);
\draw [lightgray,ultra thin] (1.9,4.7) -- (1.9,2.7);
\draw [lightgray,ultra thin] (1.6,4.7) -- (1.6,1.7);
\draw [lightgray,ultra thin] (1.3,5.5) -- (1.3,3.1);
\draw [lightgray,ultra thin] (1,5.5) -- (1,3.3);
\draw [lightgray,ultra thin] (0.7,5.5) -- (0.7,3.3);
\draw [lightgray,ultra thin] (0.4,5.5) -- (0.4,2.4);
\draw [lightgray,ultra thin] (1.3,2.7) -- (1.3,1.6);
\draw [lightgray,ultra thin] (1,2.7) -- (1,1.5);
\draw [lightgray,ultra thin] (0.7,2.7) -- (0.7,2);
\draw [lightgray,ultra thin] (0.4,1.7) -- (0.4,-0.5);
\draw [lightgray,ultra thin] (0.7,1.7) -- (0.7,-0.5);
\draw [lightgray,ultra thin] (0.1,-0.3) -- (0.9,-0.3);
\draw [lightgray,ultra thin] (0.1,0) -- (0.9,0);
\draw [lightgray,ultra thin] (0.1,0.3) -- (0.9,0.3);
\draw [lightgray,ultra thin] (0.1,0.6) -- (0.9,0.6);
\draw [lightgray,ultra thin] (0.1,0.9) -- (0.9,0.9);
\draw [lightgray,ultra thin] (0.1,1.2) -- (0.9,1.2);
\draw [lightgray,ultra thin] (0.1,1.5) -- (0.9,1.5);

\draw [lightgray,ultra thin] (0.9,1.9) -- (1.9,1.9);
\draw [lightgray,ultra thin] (0.9,2.2) -- (1.9,2.2);
\draw [lightgray,ultra thin] (0.1,2.5) -- (1.9,2.5);

\draw [lightgray,ultra thin] (0.1,3.5) -- (2.9,3.5);
\draw [lightgray,ultra thin] (1.1,3.2) -- (2.9,3.2);
\draw [lightgray,ultra thin] (1.8,2.9) -- (2.9,2.9);

\draw [lightgray,ultra thin] (0.1,4.5) -- (3.9,4.5);
\draw [lightgray,ultra thin] (0.1,4.16) -- (3.9,4.16);
\draw [lightgray,ultra thin] (0.1,3.85) -- (3.9,3.85);

\draw [lightgray,ultra thin] (0.1,4.85) -- (1.4,4.85);
\draw [lightgray,ultra thin] (0.1,5.2) -- (1.4,5.2);
\draw [lightgray,ultra thin] (2.3,5.2) -- (5.4,5.2);
\draw [lightgray,ultra thin] (2.3,4.85) -- (5.4,4.85);
\end{scope}
\end{tikzpicture}
	\end{center}
	\vspace{-0.7cm}
\end{figure}
Since we sum over all elements in $\operatorname{Part}_r(N)$ we obtain 
\begin{align*}
		g\bi{k_1,\dots,k_r}{d_1,\dots,d_r} = \sum_{N>0} \left( \sum_{\lambda \in \operatorname{Part}_r(N)} f(\lambda) \right) q^N =  \sum_{N>0} \left( \sum_{\lambda \in \operatorname{Part}_r(N)} f(\rho(\lambda)) \right) q^N\,.
\end{align*}
The $f(\rho(\lambda))$ is again a polynomial in $m_1,\dots,m_r,n_1,\dots,n_r$ and therefore the right-hand side of above equation can be written again as a linear combination of the $q$-series $g$ in the same depth and weight. We call the linear relation obtained from this \emph{partition relation}. These linear relations can be written done explicitly in all depths, and we give the explicit formula for depth one and two in the following.
\begin{proposition}[Partition relation]\label{prop:partrel}
\begin{enumerate}[i)]
    \item For $k\geq 1, d\geq 1$ we have
    \begin{align*}
        g\bi{k}{d} =   \frac{d!}{(k-1)!} 	g\bi{d+1}{k-1}\,.
    \end{align*}
    \item For $k_1,k_2\geq 1, d_1,d_2\geq 0$ we have
    \begin{align*}
        g\bi{k_1,k_2}{d_1,d_2} = \sum_{\substack{0 \leq a \leq d_1\\0 \leq b \leq k_2-1}} \frac{(-1)^b}{a!\, b!} \frac{d_1!}{(k_1-1)!} \frac{(d_2+a)!}{(k_2-1-b)!} \,\,g\bi{d_2+1+a,\,d_1+1-a}{k_2-1-b,\,k_1-1+b}\,.
    \end{align*}
\end{enumerate}
\end{proposition}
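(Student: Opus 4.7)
The plan is to apply the partition-relation machinery set up just before the statement. I write each $q$-series in the form $\sum_{N>0}\bigl(\sum_{\lambda\in\operatorname{Part}_r(N)}\phi(\lambda)\bigr)q^N$ for a polynomial weight $\phi$, and exploit the fact that Young-diagram conjugation $\rho$ is an involution on $\operatorname{Part}_r(N)$, giving $\sum_\lambda f(\lambda)=\sum_\lambda f(\rho(\lambda))$. After expanding $f(\rho(\lambda))$ as a polynomial in the original $m_j,n_j$, each resulting monomial is (up to a factorial normalization) the summand of some $g$-series of the same weight and depth, so the equality $\sum_\lambda f(\lambda)=\sum_\lambda f(\rho(\lambda))$ immediately reads as a linear combination of $g$-series.

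For part (i) a partition in $\operatorname{Part}_1(N)$ is a rectangle with $mn=N$, and conjugation simply swaps $(m,n)\mapsto(n,m)$. Hence
\begin{align*}
g\bi{k}{d}=\sum_{m,n>0}\frac{m^{d}n^{k-1}}{(k-1)!}\,q^{mn}=\sum_{m,n>0}\frac{n^{d}m^{k-1}}{(k-1)!}\,q^{mn}=\frac{d!}{(k-1)!}\,g\bi{d+1}{k-1},
\end{align*}
where the last step is a rescaling of the summand into the form required by the definition of $g\bi{d+1}{k-1}$.

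For part (ii) I would first record the depth-two conjugation explicitly. Reading off the column-heights in the Young diagram displayed in the text yields
\begin{align*}
\rho(m_1,m_2,n_1,n_2)=(n_1+n_2,\,n_1,\,m_2,\,m_1-m_2),
\end{align*}
which is an involution of $\operatorname{Part}_2(N)$: the required inequalities and the identity $(n_1+n_2)m_2+n_1(m_1-m_2)=m_1n_1+m_2n_2$ are immediate. Substituting into $f$ and using the binomial theorem
\begin{align*}
(n_1+n_2)^{d_1}=\sum_{a=0}^{d_1}\binom{d_1}{a}n_1^{d_1-a}n_2^{a},\qquad (m_1-m_2)^{k_2-1}=\sum_{b=0}^{k_2-1}(-1)^b\binom{k_2-1}{b}m_1^{k_2-1-b}m_2^{b},
\end{align*}
one obtains a double sum over $(a,b)$ of monomials $m_1^{k_2-1-b}m_2^{k_1-1+b}n_1^{d_1+d_2-a}n_2^{a}$, each of which, after restoring the factorial denominators, is the summand of $g\bi{d_1+d_2+1-a,\,a+1}{k_2-1-b,\,k_1-1+b}$. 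A final change of summation variable $a\mapsto d_1-a$ converts the upper arguments into $(d_2+1+a,\,d_1+1-a)$ and the accumulated combinatorial factor into $\frac{(-1)^b}{a!\,b!}\,\frac{d_1!}{(k_1-1)!}\,\frac{(d_2+a)!}{(k_2-1-b)!}$, exactly matching the proposition.

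The main obstacle is purely bookkeeping: one must track four indices through the binomial expansion and correctly reinstate the factorials $(K_1-1)!(K_2-1)!$ coming from the definition of $g\bi{\cdot,\cdot}{\cdot,\cdot}$, and the final relabeling $a\mapsto d_1-a$ is what puts the right-hand side into the symmetric form displayed in the proposition. Checking that $\rho$ really is an involution of $\operatorname{Part}_2(N)$ is routine geometric verification that can be read off directly from the two pictures preceding the statement.
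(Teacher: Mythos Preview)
Your proposal is correct and follows exactly the approach the paper sets up just before the proposition: write the coefficient as a sum over $\operatorname{Part}_r(N)$, apply the conjugation $\rho$ read off from the two Young-diagram figures, expand via the binomial theorem, and reassemble into $g$-series. The paper itself gives no further details beyond that description, so your write-up is precisely the intended argument carried out in full.
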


\begin{example}\label{ex:shuffleexample}Combining the stuffle product (Proposition \ref{prop:stufflebig}) and the partition relation (Proposition \ref{prop:partrel}) gives us another way of expressing the product of two $q$-series $g$:
    	\begin{align*}
		g(2)g(3) &= g\bi{2}{0}  g\bi{3}{0} \overset{\text{Prop.} \ref{prop:partrel}}{=} \frac{1}{2} g\bi{1}{1}  g\bi{1}{2} \\
		&\overset{\text{Prop.} \ref{prop:stufflebig}}{=} \frac{1}{2} \left( g\bi{1,1}{1,2}  + g\bi{1,1}{2,1} + g\bi{2}{3}  - g\bi{1}{3}   \right) \\
		&\overset{\text{Prop.} \ref{prop:partrel}}{=}  g\bi{2,3}{0,0} +  3 g\bi{3,2}{0,0} + 6 g\bi{4,1}{0,0}+ 3 g\bi{4}{1}  - 3 g\bi{4}{0}.
	\end{align*}
	Using $q \frac{d}{dq} g\bi{3}{0} = 3 g\bi{4}{1} $ we obtain 
	\begin{align*}
		g(2)g(3) &=g(2,3) + 3 g(3,2)+ 6 g(4,1)-3 g(4)+q\frac{d}{dq} g(3)\,.
	\end{align*}
	This formula looks similar to the shuffle product of the corresponding zeta values, which is given by 
	\begin{align*}
	    \zeta(2) \zeta(3) = \zeta(2,3) + 3 \zeta(3,2) + 6 \zeta(4,1)\,.
	\end{align*}
	But now we do not just have an additional lower weight term $3 g\bi{4}{0}$, but also the term $3 g\bi{4}{1}$, which is of weight $5$. Though these terms vanish after multiplying with $(1-q)^5$ and sending $q\rightarrow 1$, this additional weight $5$ term is the reason why there is no realization of the formal double zeta space $\dz_k$ with $Z_k \mapsto G_k$ and where the image of $P_{k_1,k_2}$ is given exactly by products of Eisenstein series. 
\end{example}

\begin{proposition}[Shuffle product analogue] \label{prop:shufflebi}
     For $k_1,k_2 \geq 1, d_1,d_2 \geq 0$ we have
\begin{align*}
		g\bi{k_1}{d_1} g\bi{k_2}{d_2} = &\sum_{\substack{l_1+l_2=k_1+k_2\\ e_1+e_2=d_1+d_2}} \left(\binom{l_1-1}{k_1-1}\binom{d_1}{e_1}(-1)^{d_1-e_1} +   \binom{l_1-1}{k_2-1}\binom{d_2}{e_1} (-1)^{d_2-e_1}  \right) g\bi{l_1,l_2}{e_1,e_2} \\&+\frac{d_1! d_2!}{(d_1+d_2+1)!}\binom{k_1+k_2-2}{k_1-1}g\bi{k_1+k_2-1}{d_1+d_2+1} \\
		&+d_1! d_2! \binom{k_1+k_2-2}{k_1-1}  \sum_{j=1}^{d_1+d_2+1}  \frac{\lambda^j_{d_1+1,d_2+1}}{(j-1)!} \, g\bi{k_1+k_2-1}{j-1}\,.
\end{align*}
\end{proposition}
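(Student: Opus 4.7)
The approach is to mimic the derivation in Example~\ref{ex:shuffleexample}: convert the shuffle-type product on the upper indices into a stuffle-type product on the lower indices via the partition relation (Proposition~\ref{prop:partrel}), then apply the known stuffle identity (Proposition~\ref{prop:stufflebig}), and finally convert back. The starting observation is that part i) of the partition relation, together with its own inverse application (with $(k',d')=(1,k-1)$), extends tautologically to $d=0$, yielding
\[
g\bi{k}{d} = \frac{d!}{(k-1)!}\,g\bi{d+1}{k-1}\qquad(k\geq 1,\,d\geq 0).
\]
Using this on both factors gives
\[
g\bi{k_1}{d_1} g\bi{k_2}{d_2} = \frac{d_1!\,d_2!}{(k_1-1)!(k_2-1)!}\, g\bi{d_1+1}{k_1-1} g\bi{d_2+1}{k_2-1}.
\]

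Next, I would apply Proposition~\ref{prop:stufflebig} to the product on the right-hand side, producing the depth-two terms $g\bi{d_1+1,d_2+1}{k_1-1,k_2-1}$ and $g\bi{d_2+1,d_1+1}{k_2-1,k_1-1}$, together with the depth-one contributions $g\bi{d_1+d_2+2}{k_1+k_2-2}$ and $\sum_{j=1}^{d_1+d_2+1}\lambda^j_{d_1+1,d_2+1}\,g\bi{j}{k_1+k_2-2}$. Each summand is then inverted through the partition relation: the depth-one terms by Proposition~\ref{prop:partrel}.i) (extended as above when $k_1+k_2=2$), and the depth-two terms by Proposition~\ref{prop:partrel}.ii) applied with $(k_1^{\mathrm{new}},k_2^{\mathrm{new}},d_1^{\mathrm{new}},d_2^{\mathrm{new}})=(d_1+1,d_2+1,k_1-1,k_2-1)$ (respectively the swap for the second term). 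Reindexing $l_1=k_2+a$, $e_2=d_1+b$ (so $l_1+l_2=k_1+k_2$ and $e_1+e_2=d_1+d_2$) and using the identity $\binom{k_2-1+a}{a}=\binom{l_1-1}{k_2-1}$ converts the expansion of the first depth-two term into the $\binom{l_1-1}{k_2-1}\binom{d_2}{e_1}(-1)^{d_2-e_1}$ summand; by symmetry the second produces the $\binom{l_1-1}{k_1-1}\binom{d_1}{e_1}(-1)^{d_1-e_1}$ summand. The depth-one contributions, after multiplication by the overall prefactor $d_1!d_2!/((k_1-1)!(k_2-1)!)$, yield precisely the factor $\binom{k_1+k_2-2}{k_1-1}$ appearing on the second and third lines of the claimed identity.

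The only substantive obstacle is the bookkeeping: verifying the binomial coefficient identities and the signs $(-1)^{d_j-e_1}$ after the change of variables, and observing that the naturally obtained summation ranges ($k_2\leq l_1\leq k_1+k_2-1$, $0\leq e_1\leq d_2$ for the first depth-two term, and the analogous ranges for the second) can be extended to the uniform range $l_1+l_2=k_1+k_2$, $e_1+e_2=d_1+d_2$ in the statement because the binomial coefficients vanish outside the effective window. Degenerate low-weight cases ($k_1+k_2=2$ or $d_1=d_2=0$) collapse to tautologies of the extended partition relation and require no separate argument.
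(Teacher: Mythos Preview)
Your proposal is correct and follows precisely the route sketched in the paper's own proof: apply Proposition~\ref{prop:partrel}\,i) to both factors, multiply via Proposition~\ref{prop:stufflebig}, and convert back using Proposition~\ref{prop:partrel}\,i) and ii), exactly as in Example~\ref{ex:shuffleexample}. The additional bookkeeping you provide (the reindexing $l_1=k_2+a$, $e_1=d_2-b$ and the vanishing of the binomials outside the effective window) is accurate and merely fills in details the paper leaves implicit.
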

\begin{proof}
Similar as in Example \ref{ex:shuffleexample} we first use Proposition \ref{prop:partrel} i), then Proposition  \ref{prop:stufflebig} to evaluate the product and then use again Proposition \ref{prop:partrel} i) and ii).
\end{proof}

\section{Formal double Eisenstein space}
In this section, we will introduce the formal double Eisenstein space which will be discussed in more detail in \cite{BKM}. This space will be spanned by formal symbols satisfying the same relations as the $q$-series $g$ except that we just consider the homogeneous weight parts and ignore the lower weight terms. The motivation behind this is that, for example, in depth one, the main objects we are interested in are not the $g(k)$ but the Eisenstein series $\widetilde{G}_k=-\frac{B_k}{2k!}+g(k)$ which do not satisfy algebraic relations in mixed weight. 
\begin{definition} 
	We define for $K\geq 1$ the \emph{formal double Eisenstein space} of weight $K$ as 
	\begin{align*}
		\de_K = \quotient{\Big \langle G\bi{k}{d} , G\bi{k_1,k_2}{d_1,d_2}, P\bi{k_1,k_2}{d_1,d_2} \mid \substack{ k+d = k_1 + k_2+d_1+d_2 =K \\k, k_1,k_2 \geq 1,\, d, d_1,d_2\geq 0 }\Big \rangle_\Q}{\eqref{eq:derel} }    \,,
	\end{align*}
	where we divide out the following relations 
	{\small
	\begin{align}\begin{split} \label{eq:derel}
			P\bi{k_1,k_2}{d_1,d_2}  &=  G\bi{k_1,k_2}{d_1,d_2} +G\bi{k_2,k_1}{d_2,d_1} +G\bi{k_1+k_2}{d_1+d_2} \\
			&=  \sum_{\substack{l_1+l_2=k_1+k_2\\ e_1+e_2=d_1+d_2\\l_1,l_2\geq 1, e_1,e_2\geq 0}} \left(\binom{l_1-1}{k_1-1}\binom{d_1}{e_1}(-1)^{d_1-e_1} +   \binom{l_1-1}{k_2-1}\binom{d_2}{e_1} (-1)^{d_2-e_1}  \right) G\bi{l_1,l_2}{e_1,e_2} \\&+\frac{d_1! d_2!}{(d_1+d_2+1)!}\binom{k_1+k_2-2}{k_1-1}G\bi{k_1+k_2-1}{d_1+d_2+1}\,.
		\end{split}
	\end{align}}
\end{definition}
The reason why this space can be seen as a generalization of the formal double zeta space $\dz_k$ is that we have the following embedding of $\dz_k$ in $\de_k$.
\begin{proposition} \label{prop:dktoek}
	For all  $k\geq 1$ the following gives a $\Q$-linear map $\dz_k \rightarrow \de_k$
	\begin{align*}
		Z_k &\longmapsto G\bi{k}{0} - \delta_{k,2} G\bi{2}{0} \,,\\ \quad Z_{k_1,k_2} &\longmapsto G\bi{k_1,k_2}{0,0} + \frac{1}{2} \left(  \delta_{k_2,1}G\bi{k_1}{1} -  \delta_{k_1,1} G\bi{k_2}{1} +   \delta_{k_1,2} G\bi{k_2+1}{1} \right)  \,,\\
		P_{k_1,k_2} &\longmapsto 	P\bi{k_1,k_2}{0,0}+ \frac{1}{2} \left(    \delta_{k_1,2} G\bi{k_2+1}{1} + \delta_{k_2,2} G\bi{k_1+1}{1} \right) - \delta_{k_1 \cdot k_2,1} G\bi{2}{0} \,.
	\end{align*}
\end{proposition}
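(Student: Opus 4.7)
The plan is to verify that the proposed assignment $\phi$ on the generators of $\dz_k$ respects both defining relations in \eqref{eq:dzrel}, so that $\phi$ descends to a well-defined $\Q$-linear map into $\de_k$. For each decomposition $k=k_1+k_2$ with $k_1,k_2\geq 1$, I need to check the two identities
\begin{align*}
\phi(P_{k_1,k_2}) &= \phi(Z_{k_1,k_2}) + \phi(Z_{k_2,k_1}) + \phi(Z_{k}),\\
\phi(P_{k_1,k_2}) &= \sum_{j=1}^{k-1}\left(\binom{j-1}{k_1-1} + \binom{j-1}{k_2-1}\right) \phi(Z_{j,k-j})
\end{align*}
hold in $\de_k$. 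Each reduces, via the defining relations \eqref{eq:derel} specialized to $d_1=d_2=0$, to a purely combinatorial identity among the Kronecker-delta correction terms.

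For the stuffle-like relation, I substitute the definitions of $\phi(Z_{k_1,k_2})$ and $\phi(Z_{k_2,k_1})$: the terms $\tfrac{1}{2}\delta_{k_2,1}G\bi{k_1}{1}$ and $-\tfrac{1}{2}\delta_{k_2,1}G\bi{k_1}{1}$ cancel upon symmetrisation (and analogously for the $\delta_{k_1,1}$ pair), leaving $\tfrac{1}{2}(\delta_{k_1,2}G\bi{k_2+1}{1}+\delta_{k_2,2}G\bi{k_1+1}{1})$, which matches the corresponding correction in $\phi(P_{k_1,k_2})$. The term $\phi(Z_k)=G\bi{k}{0}-\delta_{k,2}G\bi{2}{0}$ contributes the correction $-\delta_{k_1k_2,1}G\bi{2}{0}$ via the equivalence $\delta_{k,2}=\delta_{k_1k_2,1}$ for $k_1,k_2\geq 1$. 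The remaining main-term equality is exactly the stuffle-like part of \eqref{eq:derel} at $d_1=d_2=0$.

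For the shuffle-like relation, the shuffle-like part of \eqref{eq:derel} at $d_1=d_2=0$ expresses $P\bi{k_1,k_2}{0,0}$ as the double sum over $l_1+l_2=k$ plus a single derivative correction $\binom{k-2}{k_1-1}G\bi{k-1}{1}$. The depth-two terms $G\bi{j,k-j}{0,0}$ match on both sides of the identity to be verified. The delta contributions on the right come only from $j=k-1$, $j=1$, and $j=2$; combining them using the symmetry $\binom{k-2}{k_1-1}=\binom{k-2}{k_2-1}$ and the elementary identity $\binom{1}{k_j-1}=\delta_{k_j,1}+\delta_{k_j,2}$, the resulting coefficient of $G\bi{k-1}{1}$ simplifies to $\binom{k-2}{k_1-1}+\tfrac{1}{2}(\delta_{k_1,2}+\delta_{k_2,2})$, which matches the left-hand side.

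The main obstacle is the boundary case $k_1=k_2=1$, where $k=2$ and the index $j=2$ falls outside the summation range, so the shuffle-side delta contributions collapse to zero while the left-hand side still carries the correction $-G\bi{2}{0}$. The required identity is $G\bi{1}{1}=G\bi{2}{0}$ in $\de_2$, which holds because the two defining relations of $\de_2$ at $(k_1,k_2,d_1,d_2)=(1,1,0,0)$ give $P\bi{1,1}{0,0}=2G\bi{1,1}{0,0}+G\bi{2}{0}=2G\bi{1,1}{0,0}+G\bi{1}{1}$, forcing the identification in the quotient. Thus the correction $-\delta_{k_1k_2,1}G\bi{2}{0}$ in $\phi(P_{k_1,k_2})$ is engineered precisely to exploit this coincidence, and tracking these small-index identifications is the most delicate step; the remainder of the argument is straightforward delta-function bookkeeping.
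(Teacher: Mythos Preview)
Your proposal is correct and follows essentially the same approach as the paper: verify well-definedness by checking that the assignment respects both relations in \eqref{eq:dzrel}, reducing each to the $d_1=d_2=0$ case of \eqref{eq:derel} plus a handful of delta-function corrections, and invoking $G\bi{2}{0}=G\bi{1}{1}$ for the boundary case $k_1=k_2=1$. The paper's proof is a one-sentence sketch of exactly this computation; you have simply written out the details.
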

\begin{proof}
We just need to check that this map is well-defined. This can be done by applying this map to the defining equation \eqref{eq:dzrel} of $\dz_k$ and then observe that, after rearranging some terms and using $G\bi{2}{0}=G\bi{1}{1}$, the result is exactly  \eqref{eq:derel} with $d_1=d_2=0$.
\end{proof}

Due to Proposition \ref{prop:dktoek} any relation in the formal double zeta space also gives a relation in the formal double Eisenstein space. We will use this in the proof of Theorem \ref{thm:relprodandg} below. First we will be interested in realizations of the space $\de_k$, by which we again mean elements in $\Hom_\Q(\de_k,A)$ for some $\Q$-vector space $A$. Also notice that by Proposition \ref{prop:dktoek} any realization of $\de_k$ gives a realization of $\dz_k$. To write down explicit realizations for all weight $k$ we will consider generating series defined as follows
{\small 
	\begin{align*}
		\Genz_1\!\bi{X_1}{Y_1} := &\sum_{\substack{k_1\geq 1\\d_1 \geq 0}} G\bi{k_1}{d_1} X_1^{k_1-1}   \frac{Y_1^{d_1}}{d_1!}  \,, \qquad 
		\Genz_2\!\bi{X_1,X_2}{Y_1,Y_2} := \sum_{\substack{k_1,k_2\geq 1\\d_1,d_2 \geq 0}} G\bi{k_1,k_2}{d_1,d_2} X_1^{k_1-1}  X_2^{k_2-1} \frac{Y_1^{d_1}}{d_1!}  \frac{Y_2^{d_2}}{d_2!}\,, \\
		\gp{X_1,X_2}{Y_1,Y_2} := &\sum_{\substack{k_1,k_2\geq 1\\d_1,d_2 \geq 0}} P\bi{k_1,k_2}{d_1,d_2} X_1^{k_1-1}  X_2^{k_2-1} \frac{Y_1^{d_1}}{d_1!}  \frac{Y_2^{d_2}}{d_2!}\,.	
\end{align*}} 
With this  \eqref{eq:derel} can be written as
	\begin{align}\label{eq:qdsh}
		\begin{split}
			\gp{X_1,X_2}{Y_1,Y_2}&=  \Genz_2\!\bi{X_1,X_2}{Y_1,Y_2} + \Genz_2\!\bi{X_2,X_1}{Y_2,Y_1} + \frac{\Genz_1\!\bi{X_1}{Y_1+Y_2} -\Genz_1\!\bi{X_2}{Y_1+Y_2}}{X_1-X_2} \\
			&= \Genz_2\!\bi{X_1+X_2, X_2}{Y_1, Y_2-Y_1}+\Genz_2\!\bi{X_1+X_2,X_1}{Y_2, Y_1-Y_2}  + \frac{\Genz_1\!\bi{X_1+X_2}{Y_1}-\Genz_1\!\bi{X_1+X_2}{Y_2}}{Y_1-Y_2} \,.
		\end{split}
	\end{align}
Finding a realization for all $k\geq 1$ of $\de_k$ in some space $A$ is therefore equivalent of finding power series with coefficients in $A$ satisfying \eqref{eq:qdsh}.
In \cite{BKM} a systematic way of finding power series satisfying these equations will be presented. In the following we give one particular example of such a family of power series without giving its precise origin. For this we define 
\begin{align*}
\benz_1\bi{X}{Y} &:= - \frac{1}{4}\left( \coth\left( \frac{X}{2}\right)+ \coth\left( \frac{Y}{2}\right)  \right) + \frac{1}{2}\left(\frac{1}{X} + \frac{1}{Y} \right) 
\end{align*}
and with this
\begin{align*}
	R^\ast_\benz\bi{X_1,X_2}{Y_1,Y_2} &:= \frac{\benz_1\bi{X_1}{Y_1+Y_2} -\benz_1\bi{X_2}{Y_1+Y_2}}{X_1-X_2}\,,\quad R^\shuffle_\benz\bi{X_1,X_2}{Y_1,Y_2} :=\frac{\benz_1\bi{X_1+X_2}{Y_1}-\benz_1\bi{X_1+X_2}{Y_2}}{Y_1-Y_2}\,,\\	
\penz_\benz\bi{X_1,X_2}{Y_1,Y_2}&:=\benz_1\bi{X_1}{Y_1}\benz_1\bi{X_2}{Y_2}\,,\\
\benz_2\bi{X_1,X_2}{Y_1,Y_2}&:=\frac{1}{3} \penz_\benz\bi{X_1,X_2}{Y_1,Y_2} + \frac{1}{3}\penz_\benz\bi{X_1-X_2,X_2}{Y_1,Y_1+Y_2} \\
&- \frac{5}{12} R^\shuffle_\benz\bi{X_1-X_2,X_2}{Y_1,Y_1+Y_2} -  \frac{1}{12} R^\shuffle_\benz\bi{-X_2,X_1}{-Y_2,Y_1} + \frac{1}{4} R^\shuffle_\benz\bi{X_2-X_1,X_1}{Y_2,Y_1+Y_2}\\
&- \frac{5}{12} R^\ast_\benz\bi{X_1,X_2}{Y_1,Y_2} - \frac{1}{12} R^\ast_\benz\bi{X_2-X_1,X_2}{-Y_1,Y_1+Y_2} + \frac{1}{4} R^\ast_\benz\bi{X_1-X_2,X_1}{-Y_2,Y_1+Y_2} \,.
\end{align*}
Writing $\coth$ in terms of exponential functions one then checks by direct calculation that 
\begin{align}\label{eq:betadsh}
	\begin{split}
		\penz_\benz\bi{X_1,X_2}{Y_1,Y_2}&=  \benz_2\bi{X_1,X_2}{Y_1,Y_2} + \benz_2\bi{X_2,X_1}{Y_2,Y_1} + \frac{\benz_1\bi{X_1}{Y_1+Y_2} -\benz_1\bi{X_2}{Y_1+Y_2}}{X_1-X_2} \\
		&= \benz_2\bi{X_1+X_2, X_2}{Y_1, Y_2-Y_1}+\benz_2\bi{X_1+X_2,X_1}{Y_2, Y_1-Y_2}  + \frac{\benz_1\bi{X_1+X_2}{Y_1}-\benz_1\bi{X_1+X_2}{Y_2}}{Y_1-Y_2} \,,
	\end{split}
\end{align}
i.e. this gives a realization of $\de_k$ in $\Q$ for any $k\geq 1$ by taking the coefficients of the powers series $\benz_1,\benz_2$ and $\penz_\benz$ for the images of $G\bi{k}{d},G\bi{k_1,k_2}{d_1,d_2}$ and $P\bi{k_1,k_2}{d_1,d_2}$ respectively. In depth one this realization satisfies $G\bi{k}{0} \mapsto -\frac{B_k}{2k!}$ for $k>1$ and together with Proposition \ref{prop:dktoek} this gives a realization of $\dz_k$ in $\Q$ which is called the \emph{Bernoulli realization} in \cite{GKZ}. 
The above realization will give the constant term of the Eisenstein realization which we will introduce now. For this we first define the generating series of the $q$-series $g$
\begin{align*}
	\mathfrak{g}_1\bi{X}{Y} :=\sum_{\substack{k\geq 1\\d\geq 0}} g\bi{k}{d}X^{k-1} \frac{Y^d}{d!}\,,\quad 
		\mathfrak{g}_2\bi{X_1, X_2}{Y_1, Y_2} :=\sum_{\substack{k_1, k_2\geq 1\\d_1, d_2\geq 0}} g\bi{k_1, k_2}{d_1, d_2}X_1^{k_1-1}X_2^{k_2-1}  \frac{Y_1^{d_1}}{d_1!} \frac{Y_2^{d_2}}{d_2!} \,.
\end{align*}
Using these generating series the stuffle product analogue (Proposition \ref{prop:stufflebig}), the partition relation (Proposition \ref{prop:partrel}) and the shuffle product analogue (Proposition \ref{prop:shufflebi}) can be expressed as follows.
\begin{lemma}\label{lem:algstruct}
The generating series $\genz_1$ and $\genz_2$  satisfy the following relations
\begin{enumerate}[i)]
	\item Stuffle product analogue:
	{\small
\begin{align*}
	\begin{split}
		\genz_1\bi{X_1}{Y_1} \genz_1\bi{X_2}{Y_2}  = &\,\genz_2\bi{X_1, X_2}{Y_1, Y_2} +  \genz_2\bi{X_2, X_1}{Y_2, Y_1}+\frac{1}{X_1-X_2} \left(\genz_1\bi{X_1}{Y_1+Y_2} -   \genz_1\bi{X_2}{Y_1+Y_2}  \right)\\
		&+ \Big(\benz_1\bi{X_2-X_1}{Y_1+Y_2} -\benz_1\bi{X_1-X_2}{Y_1+Y_2}   \Big) \left(\genz_1\bi{X_1}{Y_1+Y_2} -   \genz_1\bi{X_2}{Y_1+Y_2}  \right)\\
		& -\frac{1}{2}\left( \genz_1\bi{X_1}{Y_1+Y_2}+  \genz_1\bi{X_2}{Y_1+Y_2}  \right) \,.
	\end{split}
\end{align*}}
	\item Partition relation:
	\begin{align*}
	\genz_1\bi{X_1}{Y_1} = \genz_1\bi{Y_1}{X_1} \,,\qquad \genz_2\bi{X_1, X_2}{Y_1, Y_2}  = \genz_2\bi{Y_1+Y_2, Y_1}{X_2, X_1-X_2}\,.
	\end{align*}
\item Shuffle product analogue:
	{\small
\begin{align*}
\begin{split}
\genz_1\bi{X_1}{Y_1} \genz_1\bi{X_2}{Y_2}  = &\,\genz_2\bi{X_1+X_2, X_1}{Y_2, Y_1-Y_2} + \genz_2\bi{X_1+X_2, X_2}{Y_1, Y_2-Y_1}  + \frac{1}{Y_1 - Y_2} \left(\genz_1\bi{X_1+X_2}{Y_1} -   \genz_1\bi{X_1+X_2}{Y_2}  \right)\\
&+ \Big(\benz_1\bi{Y_2-Y_1}{X_1+X_2} -\benz_1\bi{Y_1-Y_2}{X_1+X_2} \Big) \ \left(\genz_1\bi{X_1+X_2}{Y_1} -   \genz_1\bi{X_1+X_2}{Y_2}  \right)\\
&-\frac{1}{2}\left( \genz_1\bi{X_1+X_2}{Y_1}+  \genz_1\bi{X_1+X_2}{Y_2}    \right) \,.
\end{split}
\end{align*}}
\end{enumerate}
\end{lemma}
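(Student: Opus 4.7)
The plan is to view each of (i), (ii), (iii) as a generating-function reformulation of a coefficient-level identity already proved in Section~2, and then verify that the brackets involving $\benz_1$ on the right-hand sides correctly encode the lower-weight correction terms $\lambda^j_{k_1,k_2}$ from~\eqref{eq:deflambda}. Throughout, the translation step rests on the elementary identities $\sum_{k_1+k_2=K,\,k_1,k_2\geq 1}X_1^{k_1-1}X_2^{k_2-1}=\frac{X_1^{K-1}-X_2^{K-1}}{X_1-X_2}$ and $\sum_{d_1+d_2=D}\frac{Y_1^{d_1}Y_2^{d_2}}{d_1!\,d_2!}=\frac{(Y_1+Y_2)^D}{D!}$, which assemble any coefficient of the shape $g\bi{k_1+k_2}{d_1+d_2}$ into the symmetric-difference form $\frac{\genz_1\bi{X_1}{Y_1+Y_2}-\genz_1\bi{X_2}{Y_1+Y_2}}{X_1-X_2}$ appearing on the right-hand sides.

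I would first dispose of the partition relation~(ii). The identity of Proposition~\ref{prop:partrel}~i) also holds for $d=0$ (by swapping $m$ and $n$ in the double sum of Definition~\ref{def:singleg}), so multiplying it by $X_1^{k-1}\frac{Y_1^d}{d!}$ and relabeling $(k',d')=(d+1,k-1)$ immediately yields $\genz_1\bi{X_1}{Y_1}=\genz_1\bi{Y_1}{X_1}$. For the depth-two equality, the same monomial multiplication applied to Proposition~\ref{prop:partrel}~ii) reduces to binomial-coefficient bookkeeping that matches the expansion of $\genz_2\bi{Y_1+Y_2,Y_1}{X_2,X_1-X_2}$ term by term.

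The main work is the stuffle identity~(i). Multiplying the identity of Proposition~\ref{prop:stufflebig} by $X_1^{k_1-1}X_2^{k_2-1}\frac{Y_1^{d_1}}{d_1!}\frac{Y_2^{d_2}}{d_2!}$ and summing over $k_1,k_2\geq 1,\ d_1,d_2\geq 0$ yields the two $\genz_2$-terms and the difference quotient $\frac{1}{X_1-X_2}\bigl(\genz_1\bi{X_1}{Y_1+Y_2}-\genz_1\bi{X_2}{Y_1+Y_2}\bigr)$ for free, leaving the contribution $\sum_{j\geq 1}\bigl(\sum_{k_1,k_2\geq 1}\lambda^j_{k_1,k_2}X_1^{k_1-1}X_2^{k_2-1}\bigr)\,[X^{j-1}]\genz_1\bi{X}{Y_1+Y_2}$ to be matched with the last two lines of~(i). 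The key calculation is to expand $\benz_1$: using $\coth(x/2)=\frac{2}{x}+2\sum_{n\geq 2}\frac{B_n}{n!}x^{n-1}$ (with $B_n=0$ for odd $n\geq 3$), the $Y$-pieces in $\benz_1\bi{X_2-X_1}{Y_1+Y_2}-\benz_1\bi{X_1-X_2}{Y_1+Y_2}$ cancel and the remainder collapses to $\sum_{n\geq 2}\frac{B_n}{n!}(X_1-X_2)^{n-1}$. Extracting the coefficient of $X_1^{k_1-1}X_2^{k_2-1}$ in $(X_1^{j-1}-X_2^{j-1})(X_1-X_2)^{n-1}$ with $n=k_1+k_2-j$ then reproduces exactly the bracket defining $\lambda^j_{k_1,k_2}$ in~\eqref{eq:deflambda}, but only in the range $n\geq 2$, i.e.\ $j\leq k_1+k_2-2$. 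The step I expect to be the main obstacle is isolating the missing $n=1$ (i.e.\ $j=k_1+k_2-1$) contributions: here $B_1=-\tfrac{1}{2}$ forces $k_1=1$ or $k_2=1$, and summing these boundary cases carefully over $j$ (including the diagonal $k_1=k_2=1$, where $\lambda^1_{1,1}=-1$) is exactly what produces the extra $-\tfrac{1}{2}\bigl(\genz_1\bi{X_1}{Y_1+Y_2}+\genz_1\bi{X_2}{Y_1+Y_2}\bigr)$ term on the right-hand side of~(i).

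Finally, (iii) follows from (i) and (ii) in the spirit of Example~\ref{ex:shuffleexample}: applying $\genz_1\bi{X_i}{Y_i}=\genz_1\bi{Y_i}{X_i}$ to rewrite the left-hand side, invoking~(i) in the swapped variables $(Y_1,Y_2;X_1,X_2)$, and then using the depth-two partition relation to bring each $\genz_2$ back into its standard form delivers the formula in~(iii), with the bracket involving $\benz_1$ now carrying the $X$- and $Y$-arguments swapped as stated. No new obstacle appears at this stage.
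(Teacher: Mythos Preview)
Your proposal is correct and follows essentially the same approach as the paper: the paper's proof is the single sentence ``These are exactly the statements of Proposition~\ref{prop:stufflebig}, \ref{prop:partrel} and \ref{prop:shufflebi} written down in terms of generating series,'' and what you have written is a faithful unpacking of that sentence, including the identification of the $\benz_1$-bracket with the generating series of the $\lambda^j_{k_1,k_2}$ and the isolation of the $n=1$ boundary producing the $-\tfrac12(\cdots)$ term. The only cosmetic difference is that for~(iii) you derive the identity directly from~(i) and~(ii) at the generating-function level, whereas the paper cites the coefficient-level Proposition~\ref{prop:shufflebi}; since that proposition was itself obtained from Propositions~\ref{prop:stufflebig} and~\ref{prop:partrel} by exactly the swap-then-stuffle-then-swap-back manoeuvre you describe, the two routes are the same argument viewed at different levels.
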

\begin{proof}
These are exactly the statements of Proposition \ref{prop:stufflebig}, \ref{prop:partrel} and \ref{prop:shufflebi} written down in terms of generating series. 
\end{proof}
Using Lemma \ref{lem:algstruct} together with \eqref{eq:betadsh} one can construct a realization of $\de_k$ for all $k\geq 1$ with $G\bi{k}{0} \mapsto -\frac{B_k}{2k!} + g(k) = \widetilde{G}_k$ for $k>1$ such that $P\bi{k_1,k_2}{0,0}\mapsto \widetilde{G}_{k_1} \widetilde{G}_{k_2}$. The hard part of this is to find the correct image in depth two. There exist a general construction behind this, which is inspired by the Fourier expansion of double Eisenstein series, and which will be explained in detail in \cite{BB}.
\begin{theorem} \label{thm:eisensteinreal}The power series 
\begin{align*}
	\eenz_1\bi{X_1}{Y_1} &:=\benz_1\bi{X_1}{Y_1} + \genz_1\bi{X_1}{Y_1}\,,\\
	\eenz_2\bi{X_1, X_2}{Y_1, Y_2} &:=\benz_2\bi{X_1, X_2}{Y_1, Y_2} - \benz_1\bi{X_1-X_2}{Y_2}\genz_1\bi{X_1}{Y_1+Y_2} -\frac{1}{2} \genz_1\bi{X_1}{Y_1+Y_2} \\
	&+\benz_1\bi{X_2}{Y_2}  \genz_1\bi{X_1}{Y_1} +\benz_1\bi{X_1-X_2}{Y_1}\genz_1\bi{X_2}{Y_1+Y_2} +\genz_2\bi{X_1, X_2}{Y_1, Y_2}
\end{align*} 
and $\penz_\eenz\bi{X_1,X_2}{Y_1,Y_2}=\eenz_1\!\bi{X_1}{Y_1}\eenz_1\!\bi{X_2}{Y_2}$ satisfy 
\begin{align*}
	\begin{split}
		\penz_\eenz\bi{X_1,X_2}{Y_1,Y_2}&=  \eenz_2\bi{X_1,X_2}{Y_1,Y_2} + \eenz_2\bi{X_2,X_1}{Y_2,Y_1} + \frac{\eenz_1\bi{X_1}{Y_1+Y_2} -\eenz_1\bi{X_2}{Y_1+Y_2}}{X_1-X_2} \\
		&= \eenz_2\bi{X_1+X_2, X_2}{Y_1, Y_2-Y_1}+\eenz_2\bi{X_1+X_2,X_1}{Y_2, Y_1-Y_2}  + \frac{\eenz_1\bi{X_1+X_2}{Y_1}-\eenz_1\bi{X_1+X_2}{Y_2}}{Y_1-Y_2} \,.
	\end{split}
\end{align*}
\end{theorem}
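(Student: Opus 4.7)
The plan is to reduce the theorem to the already-established identities: the double shuffle equations \eqref{eq:betadsh} for the $\benz$ family, the stuffle/shuffle/partition relations for the $\genz$ family in Lemma \ref{lem:algstruct}, and the defining decomposition $\eenz_1=\benz_1+\genz_1$. First, expand the left-hand side $\penz_\eenz=\eenz_1\cdot\eenz_1$ into the four summands $\penz_\benz$, $\benz_1\bi{X_1}{Y_1}\genz_1\bi{X_2}{Y_2}$, $\genz_1\bi{X_1}{Y_1}\benz_1\bi{X_2}{Y_2}$ and $\genz_1\bi{X_1}{Y_1}\genz_1\bi{X_2}{Y_2}$. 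On the right-hand side, substitute the explicit definition of $\eenz_2$ so that each occurrence decomposes as a $\benz_2$-piece, four mixed $\benz_1\cdot\genz_1$ terms, a $-\tfrac{1}{2}\genz_1$ term and a $\genz_2$-piece, and also split the correction factor $\eenz_1\bi{\cdot}{Y_1+Y_2}=\benz_1\bi{\cdot}{Y_1+Y_2}+\genz_1\bi{\cdot}{Y_1+Y_2}$.

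Second, pair up the resulting contributions. The pure-$\benz$ contributions collapse to $\penz_\benz$ by \eqref{eq:betadsh}, which holds for both the stuffle and the shuffle side simultaneously. The pure-$\genz$ contributions are handled by Lemma \ref{lem:algstruct}(i) for the stuffle identity and by Lemma \ref{lem:algstruct}(iii) for the shuffle identity; these lemmas each produce an unavoidable residue of the form ``$\benz_1\cdot\genz_1$'' together with a $-\tfrac{1}{2}\genz_1$ summand. The specific mixed terms placed in the definition of $\eenz_2$ are engineered precisely so that, after combining them in $\eenz_2+\eenz_2^{\mathrm{sw}}$ with these residues from Lemma \ref{lem:algstruct}, the total matches exactly the two cross-products $\benz_1\bi{X_1}{Y_1}\genz_1\bi{X_2}{Y_2}+\genz_1\bi{X_1}{Y_1}\benz_1\bi{X_2}{Y_2}$ coming from the expansion of $\penz_\eenz$.

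Third, the shuffle version brings in an additional complication: Lemma \ref{lem:algstruct}(iii) expresses $\genz_1\cdot\genz_1$ in terms of $\genz_2\bi{X_1+X_2,X_1}{Y_2,Y_1-Y_2}$ and $\genz_2\bi{X_1+X_2,X_2}{Y_1,Y_2-Y_1}$, whereas the defining formula for $\eenz_2$ uses $\genz_2\bi{X_1,X_2}{Y_1,Y_2}$ with its straight arguments. To reconcile these one invokes the partition relation Lemma \ref{lem:algstruct}(ii), which provides the required involution, and applies the same change of variables to the mixed $\benz_1\cdot\genz_1$ terms so that the matching from step two still goes through.

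\textbf{Main obstacle.} The substantive difficulty is the combinatorial bookkeeping of the mixed $\benz_1\cdot\genz_1$ residues: several such terms with differing variable substitutions appear on each side, and verifying that the coefficients chosen in the definition of $\eenz_2$ are exactly those for which they cancel---particularly on the shuffle side, where the partition relation reshuffles all four variables---is where the real work lies. Once the matching is set up correctly, the remaining verification is purely formal and reduces, as in the proof of \eqref{eq:betadsh}, to identities among $\coth$-type generating series.
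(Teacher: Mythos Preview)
Your plan is correct and is exactly the approach indicated by the paper: the proof is a direct consequence of Lemma~\ref{lem:algstruct} together with \eqref{eq:betadsh}, carried out by expanding $\penz_\eenz=(\benz_1+\genz_1)(\benz_1+\genz_1)$ and matching the pure-$\benz$, pure-$\genz$, and mixed pieces. The paper gives only this one-line justification, so your write-up simply makes the bookkeeping explicit.
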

\begin{proof}
This follows by using Lemma \ref{lem:algstruct} together with \eqref{eq:betadsh}.
\end{proof}
From Theorem \ref{thm:eisensteinreal}  we obtain a realization of $\de_k$ in $\Q[[q]]$ for any $k\geq 1$ by taking the coefficients of the powers series $\eenz_1,\eenz_2$ and $\penz_\eenz$ for the images of $G\bi{k}{d},G\bi{k_1,k_2}{d_1,d_2}$ and $P\bi{k_1,k_2}{d_1,d_2}$ respectively.  We call this realization the \emph{Eisenstein realization}. In particular this realization satisfies
\begin{align}\label{eq:eisensteinreal}\begin{split}
		G\bi{k}{d}&\longmapsto \frac{(k-d-1)!}{(k-1)!} \left(q \frac{d}{dq}\right)^d \widetilde{G}_{k-d},\qquad (k>d\geq 0)\\
	P\bi{k_1,k_2}{0,0}&\longmapsto \widetilde{G}_{k_1} \widetilde{G}_{k_2} ,\qquad (k_1,k_2\geq 1)\,,
\end{split}
\end{align}
where $\widetilde{G}_{k}$ is defined by \eqref{eq:defgtilde} for $k\geq 2$ and $\widetilde{G}_{1}:=g(1)$.
\begin{remark} We call the coefficients of $\eenz_1$ and $\eenz_2$ \emph{combinatorial multiple Eisenstein series}. In \cite{BB} these objects will be considered for higher depths and the construction in Theorem \ref{thm:eisensteinreal} will be explained in detail. 
\end{remark}
\section{Application}
In the formal double Eisenstein space a lot explicit relations can be proven by using the defining relations \eqref{eq:derel}. A lot of these relations will be explained in \cite{BKM} and we will just mention the following Theorem which follows from a result proven in \cite{B2} for the formal double zeta space.
\begin{theorem}\label{thm:relprodandg}
For all $k_1,k_2\geq 1$ with $k=k_1+k_2\geq 4$ even we have 
{\small
\begin{align} \label{eq:relprodandg}\begin{split}
		\frac{1}{2}\left(  \binom{k_1+k_2}{k_2} - (-1)^{k_1}\right) G\bi{k}{0} =  &\sum_{\substack{j=2\\j \text{even}}}^{k-2} \left( \binom{k-j-1}{k_1-1} + \binom{k-j-1}{k_2-1} - \delta_{j,k_1} \right)  P\bi{j,k-j}{0,0}   \\
		& + \frac{1}{2} \left( \binom{k-3}{k_1-1} + \binom{k-3}{k_2-1}  + \delta_{k_1,1} + \delta_{k_2,1} \right) G\bi{k-1}{1}\,. 
		\end{split}
\end{align}}
\end{theorem}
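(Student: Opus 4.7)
The plan is to deduce this identity from the corresponding statement in the formal double zeta space $\dz_k$, lifted to $\de_k$ via the embedding of Proposition \ref{prop:dktoek}. The relevant identity in $\dz_k$, proven in \cite{B2}, is a refinement of the classical Gangl--Kaneko--Zagier even-weight double shuffle relation, and expresses a rational multiple of $Z_k$ as a $\Q$-linear combination of the $P_{j,k-j}$ with $j$ even and $2\leq j \leq k-2$: explicitly,
\begin{align*}
\frac{1}{2}\left(\binom{k_1+k_2}{k_2} - (-1)^{k_1}\right) Z_k = \sum_{\substack{j=2\\ j \text{ even}}}^{k-2}\left(\binom{k-j-1}{k_1-1} + \binom{k-j-1}{k_2-1} - \delta_{j,k_1}\right) P_{j,k-j}.
\end{align*}

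First I would apply the $\Q$-linear map $\dz_k \to \de_k$ of Proposition \ref{prop:dktoek} term by term. Since $k\geq 4$, the image of the left-hand side is simply $\tfrac{1}{2}(\binom{k_1+k_2}{k_2} - (-1)^{k_1}) G\bi{k}{0}$. On the right-hand side, every index $j$ in the sum satisfies $j\geq 2$ and $k-j\geq 2$, so the $\delta_{k_1 k_2,1}$ correction in the embedding never fires, and each $P_{j,k-j}$ maps to
\begin{align*}
P\bi{j,k-j}{0,0} + \tfrac{1}{2}\bigl(\delta_{j,2}\, G\bi{k-j+1}{1} + \delta_{k-j,2}\, G\bi{j+1}{1}\bigr).
\end{align*}
The $P\bi{j,k-j}{0,0}$ summands immediately reproduce the main term on the right-hand side of \eqref{eq:relprodandg}.

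It then remains to check that the coefficient of $G\bi{k-1}{1}$ comes out as stated. These contributions arise only at the two boundary indices $j=2$ and $j=k-2$, giving respective contributions $\tfrac12\bigl(\binom{k-3}{k_1-1}+\binom{k-3}{k_2-1}-\delta_{k_1,2}\bigr)$ and $\tfrac12\bigl(\binom{1}{k_1-1}+\binom{1}{k_2-1}-\delta_{k_1,k-2}\bigr)$ to the coefficient of $G\bi{k-1}{1}$. Expanding $\binom{1}{k_i-1}=\delta_{k_i,1}+\delta_{k_i,2}$ and using $\delta_{k_1,k-2}=\delta_{k_2,2}$ (since $k_1+k_2=k$), the $\delta_{k_i,2}$ terms cancel against the two subtracted Kronecker deltas, and what remains is exactly $\tfrac12\bigl(\binom{k-3}{k_1-1}+\binom{k-3}{k_2-1}+\delta_{k_1,1}+\delta_{k_2,1}\bigr)$, matching the stated coefficient.

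The main obstacle, beyond invoking the $\dz_k$-level identity from \cite{B2}, is this fiddly but elementary bookkeeping of delta terms introduced by the embedding; no further new idea is needed once Proposition \ref{prop:dktoek} is applied.
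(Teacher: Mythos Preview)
Your proposal is correct and follows exactly the paper's approach: cite the $\dz_k$-identity from \cite{B2} and push it through the embedding of Proposition~\ref{prop:dktoek}. The only difference is that you spell out the delta-term bookkeeping for the coefficient of $G\bi{k-1}{1}$, which the paper leaves implicit.
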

\begin{proof}
By \cite[Theorem 4.9]{B2} the following relation in the formal double zeta space holds
\begin{align*}
\frac{1}{2}\left(  \binom{k_1+k_2}{k_2} - (-1)^{k_1}\right) Z_k =  \sum_{\substack{j=2\\j \text{even}}}^{k-2} \left( \binom{k-j-1}{k_1-1} + \binom{k-j-1}{k_2-1} - \delta_{j,k_1} \right)  P_{j,k-j}    \,.
\end{align*}
Applying the linear map from Proposition \ref{prop:dktoek} to this gives exactly \eqref{eq:relprodandg}.
\end{proof}
 In \cite{BKM} a generalization of Theorem \ref{thm:relprodandg} will be presented. This theorem has nice special cases which we will give now.  
\begin{corollary} \label{cor:mfprod}
	\begin{enumerate}[i)]
		\item For even $k\geq 4$ we have
		\begin{align*}
	 G\bi{k-1}{1}	 =   \frac{k+1}{2} G\bi{k}{0} + \sum_{\substack{k_1+k_2=k \\ k_1, k_2\geq 2 \text{ even} }}  P\bi{k_1,k_2}{0,0} \,.
		\end{align*}
		\item 	 For all even $k\geq 6$ we have
		\begin{align*}
			\frac{(k+1)(k-1)(k-6)}{12}	 G\bi{k}{0} =  \sum_{\substack{k_1+k_2 = k\\k_1,k_2\geq 4 \text{ even}}} (k_1-1)(k_2-1)   \,P\bi{k_1,k_2}{0,0}\,.
		\end{align*}
	\end{enumerate}
\end{corollary}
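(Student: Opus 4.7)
The plan is to deduce both parts directly from Theorem \ref{thm:relprodandg} by suitable specialisations of $(k_1, k_2)$, combined with the symmetry $P\bi{k_1,k_2}{0,0} = P\bi{k_2,k_1}{0,0}$, which is immediate from the first defining relation in \eqref{eq:derel}.

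For part (i), I would substitute $(k_1, k_2) = (1, k-1)$ into \eqref{eq:relprodandg}. Since $k$ is even, $(-1)^{k_1} = -1$, so the left-hand side becomes $\tfrac{k+1}{2} G\bi{k}{0}$. Inside the sum, $\binom{k-j-1}{0} = 1$, $\binom{k-j-1}{k-2} = 0$ for $j \geq 2$, and $\delta_{j,1} = 0$, so each $P\bi{j,k-j}{0,0}$ receives coefficient $1$. The coefficient of $G\bi{k-1}{1}$ collapses to $\tfrac12(1 + 0 + 1 + 0) = 1$, and rearranging gives (i).

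For part (ii), I would apply Theorem \ref{thm:relprodandg} with $(k_1, k_2) = (3, k-3)$ and eliminate $G\bi{k-1}{1}$ using (i). Writing $A$, $B_j$ and $C$ for the coefficients of $G\bi{k}{0}$, $P\bi{j,k-j}{0,0}$ and $G\bi{k-1}{1}$ in this specialisation, the substitution produces a relation of the shape
\[
\Bigl[A - \tfrac{k+1}{2}\, C\Bigr] G\bi{k}{0} \;=\; \sum_{\substack{j=2\\ j \text{ even}}}^{k-2} (B_j - C)\, P\bi{j,k-j}{0,0}.
\]
Symmetrising by $P\bi{j,k-j}{0,0} = P\bi{k-j,j}{0,0}$ turns the coefficient of each $P\bi{j,k-j}{0,0}$ into $\tfrac12(B_j + B_{k-j} - 2C)$. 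A direct binomial expansion (writing $a = j-1$, $b = k-j-1$, so that $a + b = k-2$) shows that this symmetric coefficient vanishes in the boundary cases $j \in \{2, k-2\}$, thanks to the contribution $\binom{k-3}{k-4} = k-3$ matching $2C - \binom{k-3}{2}$, and equals $-\tfrac12 (j-1)(k-j-1)$ for $4 \leq j \leq k-4$. Simultaneously, $A - \tfrac{k+1}{2}C$ simplifies via the factorisation $k^3 - 6k^2 - k + 6 = (k-1)(k+1)(k-6)$ to $-\tfrac{(k+1)(k-1)(k-6)}{24}$. Multiplying through by $-2$ then yields (ii).

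The main obstacle is entirely mechanical: one has to track carefully the boundary binomials $\binom{k-j-1}{k-4}$ and $\binom{j-1}{k-4}$, which are nonzero only at $j = 2$ and $j = k-2$ respectively, in order to see that they produce exactly the cancellation responsible for excluding the pairs $(2, k-2)$ and $(k-2, 2)$ from the sum in (ii). No new ingredients beyond Theorem \ref{thm:relprodandg}, part (i), and the symmetry of $P$ are needed.
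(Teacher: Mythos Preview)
Your argument is correct. For part (i) you do exactly what the paper does: specialise Theorem \ref{thm:relprodandg} at $(k_1,k_2)=(1,k-1)$.

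For part (ii) your route differs from the paper's. The paper obtains (ii) by taking $(k-3)$ times the specialisation $(k_1,k_2)=(k-2,2)$ of Theorem \ref{thm:relprodandg} and subtracting $2$ times the specialisation $(k_1,k_2)=(k-3,3)$; this combination is chosen so that the coefficients of $G\bi{k-1}{1}$ cancel directly. You instead take the single specialisation $(k_1,k_2)=(3,k-3)$ and eliminate $G\bi{k-1}{1}$ by substituting part (i), then symmetrise via $P\bi{j,k-j}{0,0}=P\bi{k-j,j}{0,0}$. Both approaches are short and purely mechanical; yours has the pleasant feature of exhibiting (ii) as a consequence of (i), while the paper's avoids the symmetrisation step. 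The computations you sketch (the vanishing at $j\in\{2,k-2\}$, the coefficient $-\tfrac12(j-1)(k-j-1)$ in the interior, and the factorisation $k^3-6k^2-k+6=(k-1)(k+1)(k-6)$) all check out.
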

\begin{proof}
Part i) is the $k_1=1$ case of Theorem \ref{thm:relprodandg}. Part ii) follows by considering $k\!-\!3$-times the $(k_1,k_2)=(k-2,2)$ case and then subtracting $2$-times the $(k_1,k_2)=(k-3,3)$ case.
\end{proof}

\begin{example}
    \begin{enumerate}[i)]
    \item We have 
        \begin{align*}
        	G\bi{8}{0}  = \frac{6}{7} G_4^2,\,\quad  G\bi{10}{0}  = \frac{10}{11} P\bi{4,6}{0,0} \,.
        \end{align*}
        \item Analogues of the Ramanujan differential equations are satisfied 
		\begin{align*}
			2 G\bi{3}{1} &= 5 G\bi{4}{0} - 2 P\bi{2,2}{0,0}\,,\qquad 
			4 G\bi{5}{1} = 8 G\bi{6}{0} - 14 P\bi{2,4}{0,0}\,,\\
			6 G\bi{7}{1} &= \frac{120}{7} P\bi{4,4}{0,0} - 12 P\bi{2,6}{0,0}\,.
		\end{align*}
    \end{enumerate}
\end{example}

Applying the Eisenstein realization \eqref{eq:eisensteinreal} we get the well-known relations $\widetilde{G}_8 = \frac{6}{7} \widetilde{G}_4^2$, $\widetilde{G}_{10} = \frac{10}{11} \widetilde{G}_4  \widetilde{G}_6$ as well as the Ramanujan differential equations
\begin{align*}
	q \frac{d}{dq} \widetilde{G}_{2} = 5 \widetilde{G}_{4} - 2 \widetilde{G}_{2}^{2}\,,\quad
	q \frac{d}{dq} \widetilde{G}_{4} = 8 \widetilde{G}_{6} - 14 \widetilde{G}_{2}\widetilde{G}_{4}\,,\quad
	q \frac{d}{dq} \widetilde{G}_{6} = \frac{120}{7} \widetilde{G}_{4}^2 - 12 \widetilde{G}_{2} \widetilde{G}_{6}\,.
\end{align*}

\begin{remark}
\begin{enumerate}[i)]
    \item These proofs of identities among quasi-modular forms are purely combinatorial and are similar to those done in \cite{S}. Applying the Eisenstein realization to Theorem \ref{thm:eisensteinreal} can also be compared to the relations proven in \cite[Theorem 1]{HST}.
    \item In \cite{BIM} everything done here in depth two will be generalized to arbitrary depths by introducing the algebra of \emph{formal multiple Eisenstein series}. In this algebra, one can also define a subalgebra of \emph{formal (quasi-)modular forms} for which (almost) all standard relations known for modular forms can be proven on a formal level.
\end{enumerate}
\end{remark}\vspace{-0.3cm}

\end{document}